\newtheorem{theorem}{Theorem}[section]
\newtheorem{lemma}[theorem]{Lemma}
\newtheorem{example}[theorem]{Example}
\newtheorem{proposition}[theorem]{Proposition}
\newtheorem{remark}[theorem]{Remark}
\newtheorem{definition}[theorem]{Definition}
\numberwithin{equation}{section}
\newenvironment{proof31}[1][\noindent \textbf{Proof of Proposition  \ref{normal form automorphism}: }]{#1}{ \hfill $\square$ \vspace{2mm}}
\begin{document}

\title[Global Gevrey hypoellipticity for systems of vector fields]{Global Gevrey hypoellipticity on the torus for a class of systems of complex vector fields}
	
	\author[A. \'{A}rias]{Alexandre \'{A}rias Junior}
	\address{Programa de P\'{o}s-Gradua\c c\~ao em Matem\'{a}tica \\ Universidade Federal do Paran\'{a} \\ Caixa Postal 19081 \\ 81531-990, Curitiba, PR, Brazil}
	\email{{alexandreaaj@hotmail.com}}
		
	\author[A. Kirilov]{\\ Alexandre Kirilov}
	\address{Departamento de Matem\'{a}tica \\ Universidade Federal do Paran\'{a} \\ Caixa Postal 19081 \\ 81531-990, Curitiba, PR, Brazil}
	\email{akirilov@ufpr.br}
	
	\author[C. de Medeira]{Cleber de Medeira}
	\address{Departamento de Matem\'{a}tica \\ Universidade Federal do Paran\'{a} \\ Caixa Postal 19081 \\ 81531-990, Curitiba, PR, Brazil}
	\email{clebermedeira@ufpr.br}
	
	\subjclass[2010]{Primary 35N10, 35H10, 32M25, 35B10}
	
	\date{}
	
    \begin{abstract}
	    Let $L_j = \partial_{t_j} + (a_j+ib_j)(t_j) \partial_x, \, j = 1, \dots, n,$ be a system of vector fields defined on the torus $\mathbb{T}_t^{n}\times\mathbb{T}_x^1$, where the coefficients $a_j$ and $b_j$ are real-valued functions belonging to the Gevrey class $G^s(\mathbb{T}^1)$, with $s>1$.  In this paper we were able to characterize the global $s-$hypoellipticity of this  system in terms of Diophantine approximations and the Nirenberg-Treves  condition (P).
    \end{abstract}	
	
	\keywords{Global Gevrey hypoellipticity, system of vector fields, exponential Liouville vectors, Fourier series.}
	
	\maketitle

\section{Introduction}

The pourpose of this paper is to study the global $s-$hypoellipticity, for $s>1$, of the following system of vector fields
\begin{equation}\label{system}
L_j = \dfrac{\partial}{\partial t_j} + (a_j+ib_j)(t_j)\dfrac{\partial}{\partial x},\;j=1,\ldots,n,
\end{equation}
defined on the  torus $\mathbb{T}^{n+1} \simeq \mathbb{R}^{n+1}/2\pi\mathbb{Z}^{n+1}$, where $a_j$ and $b_j$ are real-valued functions in the Gevrey class $G^s(\mathbb{T}^1_{t_j})$ and $(t, x)=(t_1, \dots, t_n,x) $ denote the coordinates of $\mathbb{T}^{n+1}$.

The system \eqref{system} is said to be globally $s-$hypoelliptic  if the conditions
$u \in D'_{s}(\mathbb{T}^{n+1})$ and  $L_j u \in G^s(\mathbb{T}^{n+1}),$ for $j = 1, \dots, n$, imply that $u \in G^s(\mathbb{T}^{n+1}).$

While the study of the smooth global hypoellipticity of this system is covered by A. Bergamasco, P. Cordaro and P. Malagutti in \cite{BCM93}, the study of the global analytic hypoellipticity of \eqref{system}  is covered by  A. Bergamasco in \cite{Berg99}, however,  by using different techniques from those presented here. In particular, the existence of Gevrey cut-off functions, when $s > 1$, allows us to take a different approach from that considered in \cite{Berg99} with respect to the construction of singular solutions.

When the system \eqref{system} has only a single vector field, the global $s-$hypoel\-lipticity is described by T. Gramchev, P. Popivanov and M. Yoshino in \cite{GPY93}, and also can be obtained as  a particular case of the results of A. Bergamasco, P. Dattori and R. Gonzalez in \cite{BDG18}.


Our main result shows that the global $s-$hypoellipticity of \eqref{system} can be characterized by the Nirenberg-Treves condition (P) and by a Diophantine condition (see Theorem \ref{main theorem}). We also present examples of globally analytic hypoelliptic systems that are not globally $s-$hypoelliptic, for any $s>1,$ and examples of globally $s-$hypoelliptic systems that are not globally hypoelliptic. Finally, we make some connections with related works, such as \cite{BCM93, BdMZ12, Green72} and \cite{Hou79}, among others.

For more results on global properties of systems of vector fields on the torus, we refer the reader to the works  \cite{AlbZan03,AlbZan04,BKNZ15,BCP96,BKNZ12,PtZa08} and the references therein.

\section{Preliminaries and statement of the main result} \label{section-prelim}

Given $s\geqslant 1$ and $h>0$, we say that a smooth, complex-valued periodic function $f$ is in $G^{s,h} (\mathbb{T}^{n+1})$, if there is $C>0$ such that
$$
|\partial^{\alpha} f(t,x)| \leqslant  C h^{|\alpha|} \alpha!^s,
$$
for every multi-index $\alpha \in \mathbb{N}_0^{n+1}$ and $(t,x) \in \mathbb{T}^n_t\times\mathbb{T}^1_x$.

Observe that $G^{s, h}(\mathbb{T}^{n+1})$ is a Banach space with respect to the norm
$$
\| f \|_{s, h} \,= \sup_{\alpha \in \mathbb{N}_0^{n+1}} \sup_{(t,x) \in \mathbb{T}^{n+1}}  |\partial^{\alpha} f(t,x)|h^{-|\alpha|} \alpha!^{-s}
$$
and, for $h<h'$, the inclusion map from $G^{s, h} (\mathbb{T}^{n+1})$  to $G^{s, h'} (\mathbb{T}^{n+1})$ is continuous and compact. The space of periodic Gevrey functions of order $s$ is defined by
$$
G^s (\mathbb{T}^{n+1}) =\displaystyle \underset{h\rightarrow +\infty}{\mbox{ind} \lim} \;G^{s, h} (\mathbb{T}^{n+1}).
$$

The dual space of $G^s (\mathbb{T}^{n+1})$ is denoted by $D'_{s} (\mathbb{T}^{n+1})$ and its elements are called ultradistributions.

In this work we use the well known characterization of the elements of $G^s (\mathbb{T}^{n+1})$  by its Fourier coefficients, namely a smooth periodic function $f$ is in $G^{s}(\mathbb{T}^{n+1})$ if there exist positive constants $C$, $h$ and $\epsilon$ such that
$$
|\partial_{t'}^{\alpha}\hat{f}(t',\eta,\xi)|\leqslant  C h^{|\alpha|}(\alpha !)^{s} e^{-\epsilon |(\eta,\xi)|^{1/s}},
$$
for all $(t',\eta,\xi)\in \mathbb{T}^{\ell}\times \mathbb{Z}^{n-\ell}\times\mathbb{Z}$  and $\alpha \in \mathbb{N}_0^{\ell}$.

\begin{definition}[see \cite{GPY93, AlbPop06, Him01}]\label{E^sL vector}
	We say that $\alpha \in \mathbb{R}^m\setminus\mathbb{Q}^m$ is an exponential Liouville vector of order $s\geqslant  1$, if there exists $\epsilon>0$ such that the inequality 
	$$
	|q \alpha  - p| <  e^{-\epsilon |q|^{{1}/{s}}},
	$$
	has infinitely many solutions $(p,q)\in\mathbb{Z}^m\times \mathbb{Z}^1$.
\end{definition}

Therefore, $\alpha  \in \mathbb{R}^m\setminus\mathbb{Q}^m$ is not an exponential Liouville vector of order $s\geqslant  1$, if for every $\epsilon >0$ there exists $C_\epsilon>0$ such that
$$
|q\alpha - p| \geqslant   C_\epsilon e^{-\epsilon |q|^{{1}/{s}}},
$$
for all $(p,q)\in\mathbb{Z}^m\times \mathbb{Z}^1$.

Now, for each $j=1,\ldots,n$ we denote
\begin{equation}\label{aj0-bj0}
a_{j0}  = \frac{1}{2\pi} \int_{0}^{2\pi} a_j(s)ds \quad \text{and} \quad b_{j0}  = \frac{1}{2\pi} \int_{0}^{2\pi} b_j(s)ds.
\end{equation}
and we set $c_{j0}=a_{j0}+ib_{j0}$.

The main result of this work is the following theorem:

\begin{theorem}\label{main theorem}
	The system
	\begin{equation}\label{system-main-thm}
	L_j = \dfrac{\partial}{\partial t_j} + (a_j+ib_j)(t_j)\dfrac{\partial}{\partial x},\;j=1,\ldots,n,
	\end{equation}
	defined on $\mathbb{T}^{n+1}$, where  $a_j$ and $b_j$ are real-valued functions in $G^s(\mathbb{T}^1_{t_j})$, $s> 1$, is globally $s-$hypoelliptic if and only if at least one of the following conditions occurs:	
	\begin{enumerate}
		\item[(I)] There is $j\in \{1, \dots, n\}$ such that the function $t_j \in \mathbb{T}^1 \mapsto b_j(t_j)$ does not change sign and is not identically zero;
		\item[(II)] If the set
		$$
		J = \left\{ j \in \{1, \dots, n\}: b_j(t_j) \equiv 0 \right\} = \{j_1< \dots< j_\ell \} \neq \emptyset,
		$$
		then $a_{J0} = (a_{j_1 0}, \dots, a_{j_\ell 0})$ is not an exponential Liouville vector of order $s$ and is not in $\mathbb{Q}^\ell.$
	\end{enumerate}
\end{theorem}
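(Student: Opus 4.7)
The plan is to pass to partial Fourier series in the variable $x$. Writing $u=\sum_{\xi\in\mathbb Z}\hat u(t,\xi)e^{i\xi x}$ and $L_j u=\sum_\xi \hat f_j(t,\xi)e^{i\xi x}$, the hypothesis $L_j u\in G^s(\mathbb T^{n+1})$ becomes the family of commuting first-order ODEs on $\mathbb T^n_t$ parametrized by $\xi\in\mathbb Z$,
\begin{equation*}
\partial_{t_j}\hat u(t,\xi)+i\xi(a_j+ib_j)(t_j)\hat u(t,\xi)=\hat f_j(t,\xi),\qquad j=1,\ldots,n,
\end{equation*}
together with a uniform $G^s$ bound on $\hat f_j(\cdot,\xi)$ and $e^{-\epsilon|\xi|^{1/s}}$ decay in $\xi$. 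By the Fourier characterization recalled in Section~\ref{section-prelim}, the conclusion $u\in G^s(\mathbb T^{n+1})$ is equivalent to the analogous estimate for $\hat u(\cdot,\xi)$, so the whole theorem reduces to transferring the $\hat f_j$-bounds to $\hat u$ under (I) or (II), and to exhibiting obstructions to the transfer when both conditions fail.

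For sufficiency under (I), say $b_{j_0}\ge 0$ with positive mean: the elementary estimate $|e^{i\xi\int_a^b(a_{j_0}+ib_{j_0})}|=e^{-\xi\int_a^b b_{j_0}}\le 1$ for $\xi>0$, $a\le b$, makes the standard periodic inversion of the ODE in $t_{j_0}$ bounded uniformly in $\xi>0$ and transfers the Gevrey decay of $\hat f_{j_0}$ directly to $\hat u$, with $\xi<0$ symmetric and $\xi=0$ immediate. For sufficiency under (II), for each $j\in J$ the conjugation $\hat u\mapsto e^{-i\xi A_j(t_j)}\hat u$ with $A_j(t_j)=\int_0^{t_j}a_j$ turns $L_j\hat u=\hat f_j$ into $\partial_{t_j}(e^{-i\xi A_j}\hat u)=e^{-i\xi A_j}\hat f_j$, whose $2\pi$-periodic solvability introduces the denominator factor $e^{-2\pi i\xi a_{j0}}-1$ in the reconstruction of $\hat u$. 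Iterating over $j\in J$ and invoking the hypothesis that $a_{J0}$ is neither rational nor exponential Liouville of order $s$ yields
\begin{equation*}
\prod_{j\in J}\bigl|e^{-2\pi i\xi a_{j0}}-1\bigr|\ge C_\delta\, e^{-\delta|\xi|^{1/s}}\qquad\text{for every }\delta>0,
\end{equation*}
which absorbs only an arbitrarily small fraction of the decay of $\hat f_j$; indices $j\notin J$ are handled via the commutation $[L_i,L_j]=0$, propagating the estimate from the slice $t_J=0$ to all of $\mathbb T^n_t$.

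For necessity, assume both (I) and (II) fail, so that one is in one of two subcases: either $J=\emptyset$ and every $b_j$ changes sign, or $J\neq\emptyset$, every $b_j$ with $j\notin J$ changes sign, and $a_{J0}$ is rational or an exponential Liouville vector of order $s$. In the first subcase, the sign change of each $b_j$ provides arcs $I_j\subset\mathbb T^1$ on which $\int_0^{t_j}b_j\le -c_j<0$, where I place Gevrey cut-offs $\psi_j$ (available because $s>1$) and form
\begin{equation*}
u(t,x)=\sum_{\xi\ge 1}e^{i\xi x}\prod_{j=1}^n\psi_j(t_j)\exp\Bigl(-i\xi\!\int_0^{t_j}(a_j(s)+ib_j(s))\,ds\Bigr);
\end{equation*}
the $x$-Fourier coefficient at $\xi$ has sup norm of order $1$, so $u\notin G^s$, while $L_ju$ reduces to error terms $\psi_j'(t_j)\cdot(\cdots)\cdot e^{\xi\int_0^{t_j}b_j}$ whose Gevrey norm in $t$ decays exponentially in $\xi$. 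In the second subcase, the same construction is run along resonant modes $\xi=qk$ (when $qa_{J0}=p$) or along a Liouville sequence $q_k$ with $|q_ka_{J0}-p_k|<e^{-\epsilon|q_k|^{1/s}}$; the Liouville approximation precisely compensates for the periodicity defect of $e^{-iq_k\sum_{j\in J}A_j(t_j)}$, while Gevrey cut-offs along $t_{J^c}$ absorb the sign-changing $b_j$ for $j\notin J$ exactly as in the first subcase.

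The main obstacle is the quantitative tuning of the exponential Liouville construction: the Gevrey norms of the cut-offs $\psi_j$, the Liouville approximation rate $|q_ka_{J0}-p_k|$, the growth of $q_k$, and the amplitudes of the lacunary series must all be balanced so that $L_ju$ converges in a single space $G^{s,h}$ while $u$ escapes every such space. The assumption $s>1$ is indispensable, since Gevrey cut-offs are only available there, and this is precisely the point at which the present approach diverges from the analytic argument of \cite{Berg99}. A secondary subtlety in the sufficiency proof of (II), when $J\subsetneq\{1,\ldots,n\}$ and $b_j$ changes sign for some $j\notin J$, is the propagation of the Gevrey estimate across the full torus despite the lack of a bounded periodic inverse for the individual $L_j$; this is handled by leveraging the commutation relations together with careful bookkeeping along each coordinate axis.
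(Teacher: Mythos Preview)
Your sufficiency argument under (II) has a genuine gap: the product bound
\begin{equation*}
\prod_{j\in J}\bigl|e^{-2\pi i\xi a_{j0}}-1\bigr|\ge C_\delta\, e^{-\delta|\xi|^{1/s}}\quad\text{for every }\delta>0
\end{equation*}
does \emph{not} follow from the hypothesis that $a_{J0}$ is not an exponential Liouville vector of order $s$. That hypothesis bounds from below only the \emph{maximum} $\max_{j\in J}\min_{p\in\mathbb Z}|\xi a_{j0}-p|$, not the product of the factors; indeed (see Remark~\ref{prop_consequencia_prova_suficiencia}) one can have every individual $a_{j0}$ exponential Liouville---so each factor is infinitely often at most $e^{-\epsilon_0|\xi|^{1/s}}$ for some fixed $\epsilon_0>0$---while the vector $a_{J0}$ is not, and then your inequality fails for every $\delta<\epsilon_0$. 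There is in fact nothing to ``iterate'': inverting the periodic ODE in a \emph{single} $t_j$ already determines $\widehat u$, and the whole point is to select, for each $\xi$, the index $j=j(\xi)\in J$ with the largest denominator. The paper sidesteps this by passing to Fourier series in all of $(t',x)$ at once, which reduces the equations for $j\in J$ to the algebraic relations $i(\eta_j+\xi a_{j0})\,\widehat u(t'',\eta,\xi)=\widehat f_j(t'',\eta,\xi)$; one then divides by the largest $|\eta_j+\xi a_{j0}|$, so the vector Diophantine condition enters exactly once, as a lower bound on this maximum. The remaining variables $t''$ require no ``propagation via commutation'': the term $\widehat u(t'',0,0)$ is handled directly by observing that $\partial_{t_j}\widehat u(t'',0,0)\in G^s$ for $j\notin J$.

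Your necessity construction for $J=\emptyset$ is also wrong as written. If each $\psi_j$ is supported on an arc where $\int_0^{t_j}b_j\le -c_j<0$, then on the support of $\prod_j\psi_j$ the $x$-Fourier coefficient of your $u$ has modulus at most $e^{-\xi\sum_jc_j}$, which decays exponentially in $\xi$; hence $u\in G^s$, not a singular solution, contrary to your claim that the coefficient has sup norm of order~$1$. What is needed is a coefficient that is \emph{bounded} yet does not decay at some point. The paper obtains this, after the normal-form reduction to constant $a_j$, by normalizing by the maximum of $\int_0^{t_j}b_j$ when $c_{j0}\in\mathbb Q$ (Proposition~\ref{lema_sol_singular_campo_em_R2_c0_racional}), and when $c_{j0}\notin\mathbb Q$ by placing the Gevrey cut-off on the \emph{forcing} $f$ rather than on $u$, solving the ODE explicitly, and then extracting a Laplace-method lower bound $|\widehat u(t_0,\xi)|\ge C/\sqrt{\xi}$ (Proposition~\ref{lema_sol_singular_campo_em_R2_c0_irracional}). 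The singular solution for the full system is then built by taking products of these one-dimensional $x$-Fourier coefficients along a common arithmetic progression of frequencies.
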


The proof of this theorem is divided into the following sections. First, in Section \ref{section-normal-form}, we show that it is sufficient to consider the case where the real part of this system has constant coefficients. Next, in Section \ref{section-sufficiency}, we prove the sufficiency by estimating the Fourier coefficients of the solution of the system. Finally, in Section \ref{section-necessity}, we prove the necessity by constructing singular solutions for the system \eqref{system-main-thm} when the conditions (I) and (II) fail.

\section{Normal form} \label{section-normal-form}

The goal of this section is to show that in the study of  the global $s-$hypoel\-lipticity of the system \eqref{system-main-thm}, it is enough to consider the case where the functions $a_j$ are constants, more precisely, we will show that the system \eqref{system-main-thm} is globally $s-$hypoelliptic if and only if the system
\begin{equation*}
\widetilde{L}_j = \dfrac{\partial}{\partial {t_j} } + (a_{j0} + ib_j(t_j))\dfrac{\partial}{\partial x }, \ j=1,\ldots,n,
\end{equation*}
is globally $s-$hypo\-elliptic, with $a_{j0}$ and $b_{j0}$ defined in \eqref{aj0-bj0}.

This follows immediately from the next proposition.

\begin{proposition}\label{normal form automorphism}
	The operator  $T :  D'_{s}(\mathbb{T}^{n+1}) \to D'_{s}(\mathbb{T}^{n+1})$  given by
	\begin{equation*}
	u(t,x)=\sum_{\xi\in\mathbb{Z}}\widehat{u}(t,\xi)  e^{i\xi x} \longmapsto Tu(t,x)=\sum_{\xi\in\mathbb{Z}}\widehat{u}(t,\xi)e^{i\xi A(t)}  e^{i\xi x},
	\end{equation*}
	where
	\begin{eqnarray}\label{A definition}
	A(t) = \sum_{j=1}^{n} \int_{0}^{t_j} a_j(s)ds -a_{j0}t_j,
	\end{eqnarray}
	defines an automorphism in $D'_{s}(\mathbb{T}^{n+1})$ (and in $G^{s}(\mathbb{T}^{n+1})$).
	
	Furthermore, the following conjugation holds
	$$
	\widetilde{L}_j = T L_j T^{-1},
	$$
	for $j=1,\ldots,n.$
\end{proposition}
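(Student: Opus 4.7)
The plan is to identify $T$ with the pullback by a Gevrey diffeomorphism of the torus and then invoke the standard fact that such pullbacks act as continuous automorphisms on both $G^s$ and $D'_s$. The first key observation is that $A$ is periodic in each $t_j$: since $a_{j0}$ equals the mean of $a_j$ on $[0,2\pi]$, a direct computation gives
$$
\int_0^{t_j+2\pi}a_j(s)\,ds - a_{j0}(t_j+2\pi) \;=\; \int_0^{t_j}a_j(s)\,ds - a_{j0}t_j,
$$
so that $A\in G^s(\mathbb{T}^n)$. Consequently, the map $\Phi:\mathbb{T}^{n+1}\to\mathbb{T}^{n+1}$ defined by $\Phi(t,x)=(t,x+A(t))$ is a $G^s$-diffeomorphism with inverse $(t,x)\mapsto(t,x-A(t))$ and Jacobian identically equal to $1$. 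Summing the partial Fourier series in $x$ gives $Tu(t,x)=u(t,x+A(t))$ for $u\in G^s$; that is, $T$ is the pullback by $\Phi$.

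With this interpretation, I would first verify that $T$ acts as a continuous automorphism of $G^s(\mathbb{T}^{n+1})$. The cleanest route uses the Fourier characterization recalled in the excerpt: given $|\partial_t^\alpha\widehat{u}(t,\xi)|\leq Ch^{|\alpha|}\alpha!^s e^{-\epsilon|\xi|^{1/s}}$, one estimates $\partial_t^\alpha\bigl(e^{i\xi A(t)}\widehat{u}(t,\xi)\bigr)$ by Leibniz and Fa\`a di Bruno; the powers $|\xi|^k$ (with $k\leq|\alpha|$) that arise from differentiating $e^{i\xi A(t)}$ are absorbed by the elementary inequality
$$
|\xi|^k e^{-\epsilon|\xi|^{1/s}} \;\leq\; C_\delta^{\,k}\, k!^s\, e^{-\delta|\xi|^{1/s}}, \qquad 0<\delta<\epsilon,
$$
which yields a bound of the same form for $\widehat{Tu}(t,\xi)$ with new constants. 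Applying the same argument to $\Phi^{-1}$ gives a continuous inverse, so $T$ is an automorphism of $G^s$. The extension to $D'_s$ is then obtained by duality: define $\langle Tu,\varphi\rangle:=\langle u,\varphi\circ\Phi^{-1}\rangle$ for $\varphi\in G^s$ (legitimate since $\Phi$ is measure-preserving); the previous step guarantees $\varphi\circ\Phi^{-1}\in G^s$ with continuous dependence, and the analogous formula for $T^{-1}$ yields bijectivity on $D'_s$.

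The conjugation $\widetilde{L}_j = TL_jT^{-1}$ is a short computation on Fourier coefficients. Using $\widehat{Tu}(t,\xi)=e^{i\xi A(t)}\widehat{u}(t,\xi)$ and the key identity $\partial_{t_j}A(t)=a_j(t_j)-a_{j0}$, one finds
$$
\widehat{\widetilde{L}_j Tu}(t,\xi) \;=\; e^{i\xi A(t)}\bigl[\partial_{t_j}\widehat{u}(t,\xi) + i\xi\bigl(a_j(t_j)+ib_j(t_j)\bigr)\widehat{u}(t,\xi)\bigr] \;=\; \widehat{T L_j u}(t,\xi),
$$
so $\widetilde{L}_j T = T L_j$ on $G^s$, and this identity extends to $D'_s$ by continuity.

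The main obstacle is the Gevrey bookkeeping in the first step: one must verify that the composition $u\mapsto u\circ\Phi$ transforms the Gevrey parameter $h$ in a controlled way and that the decay rate $\epsilon$ can indeed be taken small enough to absorb the polynomial growth in $\xi$ coming from the derivatives of $e^{i\xi A(t)}$. This is a standard but somewhat delicate Fa\`a di Bruno estimate; once it is carried out, the remainder of the proposition reduces to the formal computations outlined above.
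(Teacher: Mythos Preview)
Your proposal is correct, and the core technical ingredient---the Fa\`a di Bruno estimate controlling $\partial_t^\alpha e^{i\xi A(t)}$ against $e^{\epsilon|\xi|^{1/s}}(\alpha!)^s$---is exactly what the paper isolates as its Lemma~\ref{important lemma}. The organization, however, differs in a way worth noting. You first identify $T$ as the pullback $u\mapsto u\circ\Phi$ by the $G^s$-diffeomorphism $\Phi(t,x)=(t,x+A(t))$, prove the $G^s$-automorphism property via the Fourier characterization, and then obtain the $D'_s$-automorphism by transposition (using that $\Phi$ is measure-preserving). The paper proceeds in the opposite order: it works directly with the Fourier-series definition of $T$ on $D'_s$, estimates $|\langle \widehat{u}(t,\xi)e^{i\xi A(t)},\varphi\rangle|$ against the $G^{s,h}$-seminorm of $\varphi$ using Lemma~\ref{important lemma} and the explicit continuity bound for ultradistributions, and only afterwards checks $G^s$-invariance. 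Your duality route is cleaner conceptually and explains \emph{why} the result holds (pullback by a Gevrey diffeomorphism), whereas the paper's approach is more self-contained and never names $\Phi$. One small point you should make explicit: since the proposition \emph{defines} $T$ on $D'_s$ by the Fourier series, you need a sentence observing that your duality extension agrees with that definition (this follows because both are weak-$*$ continuous and coincide on the dense subspace $G^s$). The conjugation computation is identical in both treatments.
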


Before proceeding with the proof of this result, let us present three technical lemmas.

Let $m\in\mathbb{N}$. Consider the set
\begin{equation}\label{delta definition}
\Delta(m)=\{(k_1,\ldots,k_m)\in\mathbb{N}^m_0: k_1+2k_2+\cdots+mk_m=m\}.
\end{equation}

\begin{lemma}\label{lema_estimativa_produtorio_vzs_fatorial}
	If $(k_1,\ldots,k_m) \in \Delta(m)$ and $k\doteq k_1+\cdots +k_m$ then, for any $s>1$, we have
	$$
	(k!)^{s} \prod_{\ell=1}^{m} \ell !^{(s-1)k_\ell} \leqslant  k! \;m!^{s-1}.
	$$
\end{lemma}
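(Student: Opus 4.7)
The plan is to strip the parameter $s$ from the inequality before attacking it. Dividing both sides by $k!$, the claim becomes
\[
(k!)^{s-1} \prod_{\ell=1}^{m} (\ell!)^{(s-1) k_\ell} \leqslant (m!)^{s-1},
\]
whose left-hand side equals $\bigl( k! \prod_{\ell=1}^{m} (\ell!)^{k_\ell} \bigr)^{s-1}$. Since $s - 1 > 0$, taking $(s-1)$-th roots reduces the lemma to the purely combinatorial statement
\[
k! \prod_{\ell=1}^{m} (\ell!)^{k_\ell} \leqslant m!.
\]

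I would then prove this inequality by induction on $k$. The base case $k = 1$ forces $k_m = 1$ (and $k_\ell = 0$ for $\ell \neq m$), so both sides equal $m!$. For the inductive step with $k \geqslant 2$, let $\ell^*$ be the smallest index with $k_{\ell^*} \geqslant 1$. The tuple obtained by decrementing $k_{\ell^*}$ lies in $\Delta(m - \ell^*)$ and has exactly $k - 1$ parts, so the inductive hypothesis yields
\[
(k-1)! \,(\ell^*!)^{k_{\ell^*} - 1} \prod_{\ell \neq \ell^*} (\ell!)^{k_\ell} \leqslant (m - \ell^*)!.
\]
Multiplying this through by $k \cdot \ell^*!$ produces
\[
k! \prod_{\ell=1}^m (\ell!)^{k_\ell} \leqslant k \cdot \ell^*!\,(m - \ell^*)! = \frac{k\, m!}{\binom{m}{\ell^*}},
\]
so the target inequality follows once we verify $\binom{m}{\ell^*} \geqslant k$. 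The minimality of $\ell^*$ forces $m = \sum_\ell \ell k_\ell \geqslant \ell^* k$, and the standard lower bound $\binom{n}{r} \geqslant (n/r)^r$ (valid for $n \geqslant r \geqslant 1$) then yields $\binom{m}{\ell^*} \geqslant \binom{\ell^* k}{\ell^*} \geqslant k^{\ell^*} \geqslant k$, closing the induction.

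The chief subtlety is avoiding the tempting but weaker identity $\prod k_\ell! \prod (\ell!)^{k_\ell} \leqslant m!$, which follows from counting set partitions by type: the ratio $k!/\prod k_\ell!$ is the multinomial coefficient $\binom{k}{k_1, \ldots, k_m}$, typically much larger than $1$, so replacing $k!$ by $\prod k_\ell!$ loses too much. The induction on the smallest part is what recovers this gap, precisely through the estimate $\binom{m}{\ell^*} \geqslant k$.
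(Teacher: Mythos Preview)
Your reduction to the parameter-free inequality $k!\prod_{\ell=1}^{m}(\ell!)^{k_\ell}\leqslant m!$ is correct, and your induction on $k$ goes through: the key step $\binom{m}{\ell^*}\geqslant k$ is justified exactly as you say, since minimality of $\ell^*$ gives $m\geqslant \ell^* k$ and then $\binom{m}{\ell^*}\geqslant\binom{\ell^* k}{\ell^*}\geqslant k^{\ell^*}\geqslant k$. The edge cases are handled (for $k\geqslant 2$ the decremented tuple still has at least one part, so $m-\ell^*\geqslant 1$ and the inductive hypothesis applies).

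The paper itself does not prove this lemma; it simply cites Lemma~2.1 of \cite{BDG18}. Your argument is therefore a self-contained substitute for that external reference. Compared to the ``obvious'' route via the set-partition count (which only yields the weaker $\prod_\ell k_\ell!\prod_\ell(\ell!)^{k_\ell}\leqslant m!$, as you correctly note), your induction on the smallest part is exactly what upgrades $\prod_\ell k_\ell!$ to the full $k!$, and the mechanism---trading a factor of $k$ for a binomial coefficient $\binom{m}{\ell^*}$---is both short and transparent.
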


\begin{lemma}\label{lema_sominha_maluca}
	For each $m\in \mathbb{N}$ and $R \in \mathbb{R}$ we have
	$$
	\sum _{\Delta(m)} \dfrac{k!}{k_1!k_2!\ldots k_m!} R^{k} = R(1+R)^{m-1},
	$$
	where $k\doteq k_1+\cdots +k_m$.
\end{lemma}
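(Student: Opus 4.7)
The plan is to interpret the left-hand side combinatorially in terms of compositions of $m$. Recall that a \emph{composition} of $m$ is an ordered tuple $(a_1,\ldots,a_k)$ of positive integers with $a_1+\cdots+a_k=m$. Its \emph{frequency vector} $(k_1,\ldots,k_m)$, where $k_i$ counts how many of the $a_j$ equal $i$, automatically satisfies $k_1+2k_2+\cdots+mk_m=m$ and $k_1+\cdots+k_m=k$, hence lies in $\Delta(m)$. Conversely, given $(k_1,\ldots,k_m)\in\Delta(m)$ with $k = k_1+\cdots+k_m$, the number of compositions of $m$ whose frequency vector is $(k_1,\ldots,k_m)$ equals the number of distinct orderings of a multiset containing $k_1$ copies of $1$, $k_2$ copies of $2$, \ldots, $k_m$ copies of $m$, which is the multinomial coefficient $\dfrac{k!}{k_1!\,k_2!\cdots k_m!}$.

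Grouping the compositions of $m$ according to their frequency vectors then gives
\[
\sum_{\Delta(m)} \frac{k!}{k_1!\,k_2!\cdots k_m!}\,R^k \;=\; \sum_{k=1}^{m} c(m,k)\, R^k,
\]
where $c(m,k)$ is the total number of compositions of $m$ into exactly $k$ positive parts. A standard stars-and-bars argument (insert $k-1$ dividers among the $m-1$ interior gaps of a row of $m$ stars) yields $c(m,k)=\binom{m-1}{k-1}$, and the binomial theorem then finishes the proof:
\[
\sum_{k=1}^{m}\binom{m-1}{k-1} R^k \;=\; R\sum_{j=0}^{m-1}\binom{m-1}{j} R^j \;=\; R(1+R)^{m-1}.
\]

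The one step that needs genuine care is the multinomial bookkeeping: one must verify that compositions really are enumerated by the multinomial coefficient attached to their frequency vector, rather than by some symmetrized version. As a self-contained cross-check (and an alternative route avoiding combinatorial language altogether), one can expand $\bigl(\sum_{j\geq 1} y^j\bigr)^k=\bigl(\tfrac{y}{1-y}\bigr)^k$ by the multinomial theorem and observe that, after multiplication by $R^k$ and summation over $k\geq 1$, the $y^m$-coefficient of the result is exactly the left-hand side of the lemma, while summing the geometric series gives $\dfrac{Ry}{1-(1+R)y}=\sum_{m\geq 1} R(1+R)^{m-1} y^m$, matching coefficients termwise. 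Either route works uniformly: all sums involved are finite, so no convergence issue arises and the identity holds for every real (indeed complex) $R$.
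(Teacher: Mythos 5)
Your proof is correct. The paper itself does not prove this lemma; it simply cites Lemma 1.3.2 of Krantz--Parks, so you are supplying an argument the authors omitted. Your main route is sound: every $(k_1,\ldots,k_m)\in\Delta(m)$ with $k=k_1+\cdots+k_m$ is the frequency vector of exactly $\frac{k!}{k_1!\cdots k_m!}$ compositions of $m$ into $k$ positive parts (permutations of a multiset), grouping by $k$ gives $\sum_{k=1}^{m}\binom{m-1}{k-1}R^k$ by stars and bars, and the binomial theorem finishes it; note also that $k\geqslant 1$ automatically on $\Delta(m)$, so the sum correctly starts at $k=1$, matching the factor $R$ on the right. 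The generating-function cross-check via the $y^m$-coefficient of $\sum_{k\geqslant 1}R^k\bigl(\tfrac{y}{1-y}\bigr)^k=\tfrac{Ry}{1-(1+R)y}$ is likewise a complete formal-power-series proof on its own (both sides are polynomials in $R$, so the identity for all real $R$ follows). A quick check at $m=2,3$ confirms the bookkeeping.
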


The proof of Lemma \ref{lema_estimativa_produtorio_vzs_fatorial} can be found in \cite{BDG18} (see Lemma 2.1) and the proof of Lemma \ref{lema_sominha_maluca}  in \cite{KrPar02-book} (see Lemma 1.3.2).

\begin{lemma}\label{important lemma}
	Given $\epsilon>0$, there exists $C_{\epsilon}>0$ such that
	$$
	e^{-\epsilon|\xi|^{1/s}}\left| \partial_{t}^{\alpha} e^{i\xi A(t)}\right|\leqslant   C_\epsilon^{|\alpha|} ({\alpha} !)^s,
	$$
	for all $(t,\xi)\in\mathbb{T}^{n}\times \mathbb{Z}$ and $\alpha\in\mathbb{N}_0^{n},$ where $A(t)$ is defined in \eqref{A definition}.
\end{lemma}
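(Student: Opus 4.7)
The plan is to reduce to a single-variable estimate by exploiting the structure of $A(t)$, then apply Faà di Bruno's formula together with the two preparatory lemmas. The crucial observation is that
$$
A(t)=\sum_{j=1}^n A_j(t_j),\qquad A_j(t_j)=\int_0^{t_j} a_j(s)\,ds - a_{j0}t_j,
$$
so each $A_j$ is a genuinely periodic function on $\mathbb{T}^1$ belonging to $G^s(\mathbb{T}^1)$ (the subtraction of the mean guarantees periodicity, and the Gevrey regularity passes to the antiderivative). Consequently
$$
e^{i\xi A(t)}=\prod_{j=1}^n e^{i\xi A_j(t_j)},\qquad \partial_t^\alpha e^{i\xi A(t)}=\prod_{j=1}^n \partial_{t_j}^{\alpha_j} e^{i\xi A_j(t_j)},
$$
since the factors depend on disjoint variables. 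Thus it suffices to obtain the single-variable estimate and multiply.

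For the one-dimensional factor, I would fix constants $C_0,h_0>0$ with $|A_j^{(\ell)}(t_j)|\leqslant C_0 h_0^{\ell}(\ell!)^s$ and apply Faà di Bruno to $e^{i\xi A_j(\cdot)}$, obtaining
$$
\partial_{t_j}^{m} e^{i\xi A_j(t_j)} = e^{i\xi A_j(t_j)} \sum_{\Delta(m)} \frac{m!}{k_1!\cdots k_m!}(i\xi)^k \prod_{\ell=1}^{m}\left(\frac{A_j^{(\ell)}(t_j)}{\ell!}\right)^{k_\ell},
$$
with $k=k_1+\cdots+k_m$. Taking moduli and using the Gevrey bound for $A_j$ reduces the task to controlling
$$
h_0^m \sum_{\Delta(m)} \frac{m!}{k_1!\cdots k_m!}\,|\xi|^k\, C_0^k \prod_{\ell=1}^m (\ell!)^{(s-1)k_\ell}.
$$
Now I would use the elementary inequality $|\xi|^k \leqslant D_\epsilon^k (k!)^s e^{\epsilon|\xi|^{1/s}}$ (valid for every $\epsilon>0$, which comes from optimizing $x^k e^{-\epsilon x^{1/s}}$) to absorb the $|\xi|^k$ into the exponential, producing a factor $(k!)^s$. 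At this point Lemma \ref{lema_estimativa_produtorio_vzs_fatorial} collapses $(k!)^s \prod_\ell (\ell!)^{(s-1)k_\ell}\leqslant k!\,m!^{s-1}$, and Lemma \ref{lema_sominha_maluca} sums what remains:
$$
m!\sum_{\Delta(m)} \frac{k!}{k_1!\cdots k_m!}(C_0 D_\epsilon)^k = m!\,R(1+R)^{m-1}\leqslant m!\,(1+R)^m,
$$
with $R=C_0 D_\epsilon$. Putting the pieces together yields
$$
|\partial_{t_j}^{m} e^{i\xi A_j(t_j)}|\leqslant e^{\epsilon|\xi|^{1/s}}\bigl(h_0(1+R)\bigr)^m (m!)^s,
$$
and $m=0$ is trivial.

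Finally, to obtain the multi-variable statement I would apply the one-dimensional estimate to each factor but with $\epsilon$ replaced by $\epsilon/n$, so that $\prod_{j=1}^n e^{(\epsilon/n)|\xi|^{1/s}} = e^{\epsilon|\xi|^{1/s}}$; using $\alpha!=\prod_j \alpha_j!$ one ends up with
$$
|\partial_t^\alpha e^{i\xi A(t)}|\leqslant e^{\epsilon|\xi|^{1/s}} C_\epsilon^{|\alpha|}(\alpha!)^s,
$$
which is the desired inequality after moving $e^{-\epsilon|\xi|^{1/s}}$ to the left side. The only real technical issue is the bookkeeping of constants in the combinatorial estimate—making sure that the $(k!)^s$ generated by the bound $|\xi|^k \leqslant D_\epsilon^k (k!)^s e^{\epsilon|\xi|^{1/s}}$ matches exactly the $(k!)^s$ consumed by Lemma \ref{lema_estimativa_produtorio_vzs_fatorial}, so that Lemma \ref{lema_sominha_maluca} can be applied without any leftover factorial; this is what forces the use of the particular form of the gain inequality above.
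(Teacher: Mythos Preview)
Your proposal is correct and follows essentially the same route as the paper: reduce to the single-variable case via $A(t)=\sum_j A_j(t_j)$ with $\epsilon$ replaced by $\epsilon/n$, apply Fa\`a di Bruno, trade $|\xi|^k$ for $(k!)^s$ times the exponential, and then use Lemmas \ref{lema_estimativa_produtorio_vzs_fatorial} and \ref{lema_sominha_maluca} to close the combinatorial sum. The only cosmetic difference is that the paper introduces $\tilde\epsilon=\epsilon/n$ at the outset rather than at the end.
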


\begin{proof}
	Let $\epsilon>0$  and $\alpha=(\alpha_1,\ldots,\alpha_n)\in\mathbb{N}_0^{n}$.  Since $A(t)=\sum_{j=1}^{n}A_j(t_j)$, where $$A_j(t_j)=\int_{0}^{t_j}a_j(\tau)d\tau-a_{j0}t_j,$$
	then it is enough to prove that: for each $j$ there exists $C_{\epsilon,j}>0$ such that
	$$
	e^{-\widetilde{\epsilon}|\xi|^{1/s}}\left| \partial_{t_j}^{\alpha_j} e^{i\xi A_j(t_j)}\right|\leqslant   C_{\epsilon,j}^{\alpha_j} ({\alpha_j} !)^s,
	$$
	for all $(t_j,\xi)\in\mathbb{T}^{1}\times \mathbb{Z}$ and $\alpha_j\in\mathbb{N}_0,$ where $\widetilde{\epsilon}=\epsilon/n$.
	
	By fixing $j$, since $A_j\in G^{s}(\mathbb{T}^1)$, then there exist $C_j>0$ and $h_j>0$ such that
	$$
	|\partial_{t_j}^{\ell}A_j(t_j)|\leqslant  C_j h_j^{\ell}(\ell!)^s,
	$$
	for all $t_j\in\mathbb{T}^1$ and $\ell\in \mathbb{N}_{0}$.
	
	Also,  we have
	$
	|\xi|^k e^{-\widetilde{\epsilon}|\xi|^{1/s}}\leqslant \widetilde{ C}_{\epsilon}^{k} (k!)^s
	$
	for all $\xi\in\mathbb{Z}$ and $k\in\mathbb{N}$, where $\widetilde{C}_{\epsilon}=({s}/\widetilde{\epsilon})^s$.
	If $\alpha_j=0$ the result is immediate. For $\alpha_j>0$, according to Fa\`{a} Di Bruno's Formula (see Lemma 1.3.1 in \cite{KrPar02-book})  and the previous ine\-qua\-li\-ties we have
	\begin{eqnarray*}
		e^{-\widetilde{\epsilon}|\xi|^{1/s}}\!\left| \partial_{t_j}^{\alpha_j} e^{i\xi A_j(t_j)}\right|\!\! &=& \! e^{-\widetilde{\epsilon}|\xi|^{1/s}}\! \left|\sum_{\Delta(\alpha_j)}\dfrac{\alpha_j!}{k_1!\ldots k_{\alpha_j}!}e^{i\xi A_j(t_j)}\! \prod_{\ell=1}^{\alpha_j}\!\left[\dfrac{\partial_{t_j}^{\ell} (i\xi A_j(t_j))}{\ell !}\right]^{k_\ell}\right|\\
		&\leqslant & \!   e^{-\widetilde{\epsilon}|\xi|^{1/s}}\sum_{\Delta(\alpha_j)}\dfrac{\alpha_j!}{k_1!\ldots k_{\alpha_j}!}|\xi|^k \prod_{\ell=1}^{\alpha_j}\left[\dfrac{C_jh_j^{\ell}(\ell!)^s}{\ell!}\right]^{k_\ell}\\
		&\leqslant &  \! \sum_{\Delta(\alpha_j)}\dfrac{\alpha_j!}{k_1!\ldots k_{\alpha_j}!}(\widetilde{C}_{\epsilon} C_j)^k (k!)^s h_j^{\alpha_j}\prod_{\ell=1}^{\alpha_j}(\ell!)^{(s-1)k_{\ell}},
	\end{eqnarray*}
	where $\Delta(\alpha_j)$ is defined in \eqref{delta definition} and  $k\doteq k_1+\cdots+k_{\alpha_j}$.

	Finally, by applying Lemmas  \ref{lema_estimativa_produtorio_vzs_fatorial} and \ref{lema_sominha_maluca}  we obtain
	\begin{eqnarray*}
		e^{-\widetilde{\epsilon}|\xi|^{1/s}}\left| \partial_{t_j}^{\alpha_j} e^{i\xi A_j(t_j)}\right|&\leqslant &  h_j^{\alpha_j}\sum_{\Delta(\alpha_j)}\dfrac{\alpha_j!}{k_1!\ldots k_{\alpha_j}!}(\widetilde{C}_{\epsilon} C_j)^k k! (\alpha_j!)^{(s-1)}\\
		&\leqslant &  h_j^{\alpha_j}(\alpha_j!)^s\sum_{\Delta(\alpha_j)}\dfrac{k!}{k_1!\ldots k_{\alpha_j}!}(\widetilde{C}_{\epsilon} C_j)^k \\
		&\leqslant &  h_j^{\alpha_j}(\alpha_j!)^s (\widetilde{C}_{\epsilon} C_j)(1+\widetilde{C}_{\epsilon} C_j)^{\alpha_j-1} \\\\
		&\leqslant &  C_{\epsilon,j}^{\alpha_j} ({\alpha_j} !)^s,
	\end{eqnarray*}
	for all $(t_j,\xi)\in\mathbb{T}^{1}\times \mathbb{Z}$ and $\alpha_j\in\mathbb{N}_0,$ where $C_{\epsilon,j} = h_j(1+\widetilde{C}_{\epsilon} C_j)$.
	
	By taking $C_{\epsilon}=\max\{C_{\epsilon,j}:\; j=1,\ldots , n\}$ the proof is complete.

\end{proof}

\begin{proof31}
	Given $\epsilon > 0$ and $h > 0$, we have to show that there exists $C=C_{\epsilon, h}  > 0$ such that
	$$
	|\langle \widehat{u}(t,\xi)e^{i\xi A(t)}, \varphi \rangle| \leqslant  C \| \varphi \|_{s, h} e^{\epsilon |\xi|^{{1}/{s}}},
	$$
	for all $\xi \in \mathbb{Z}$ and $\varphi \in G^{s, h} (\mathbb{T}^{n})$.
	
	Consider $\varphi \in G^{s, h} (\mathbb{T}^{n})$. It follows from Lemma \ref{important lemma} that there exists $C_{\epsilon} > 0$ such that
	\begin{eqnarray}\label{ineq 1}
	\displaystyle |e^{-\frac{\epsilon}{2} |\xi|^{{1}/{s}}} \partial_t^{\alpha}(e^{i \xi A(t)}  \varphi(t))| &=&
	\left|\displaystyle \sum_{\beta \leqslant  \alpha} \binom{\alpha}{\beta} e^{-\frac{\epsilon}{2} |\xi|^{1/s}}( \partial_t^{\beta}e^{i \xi A(t)})(  \partial_t^{\alpha - \beta} \varphi(t))\right| \nonumber\\
	&\leqslant & \displaystyle\sum_{\beta \leqslant  \alpha} \binom{\alpha}{\beta} C_{\epsilon}^{|\beta|}(\beta!)^{s}
	\| \varphi \|_{s, h} ((\alpha - \beta)!)^s h^{|\alpha - \beta|} \nonumber\\
	&\leqslant &
	\widetilde{{C}}^{|\alpha|}_{\epsilon, h}(\alpha!)^s\| \varphi \|_{s, h},
	\end{eqnarray}
	where $\widetilde{C}_{\epsilon, h} = 2(h + 1)(C_{\epsilon}+1)$.
	
	By taking $\overline{C}_{\epsilon}=({2s}/{\epsilon})^s$ we have
	\begin{eqnarray}\label{ineq 2}
	|\xi|^{k} e^{-\frac{\epsilon}{2} |\xi|^{{1}/{s}}} \leqslant  \overline{C}_{\epsilon}^{k} (k!)^s,
	\end{eqnarray}
	for all $\xi\in\mathbb{Z}$ and $k\in\mathbb{N}_0$.
	
	Setting $\eta^{-1} \doteq \max \{\widetilde{C}_{\epsilon, h}, \overline{C}_{\epsilon} \}$, by the continuity of $u$ there exists $C_\eta > 0$ such that
	$$
	|\langle u, \psi \rangle| \leqslant  C_\eta \sup_{(t,x)\in\mathbb{T}^{n+1}} \sup_{(\alpha, k)\in\mathbb{N}_{0}^{n+1}} |\partial^{(\alpha, k)}\psi(t, x)| \eta^{|\alpha|+ k}(\alpha! k!)^{-s},
	$$
	for all $\psi \in G^s(\mathbb{T}^{n+1}).$ Therefore,
	\begin{align*}
	|\langle \widehat{u}(t,\xi) e^{i \xi A(t)}, \varphi(t) \rangle| = |\langle \widehat{u}(t,\xi), e^{i \xi A(t)} \varphi(t) \rangle| = \dfrac{1}{2\pi}|\langle u, e^{i \xi A(t)} \varphi(t) e^{-i \xi x} \rangle|  \\
	\leqslant  \dfrac{C_\eta}{2\pi} \,\, \sup_{(t,x)\in\mathbb{T}^{n+1}} \sup_{(\alpha, k)\in\mathbb{N}_{0}^{n+1}} |\partial^{\alpha}_t ( e^{i \xi A(t)} \varphi(t)) \partial^{k}_x  e^{-i \xi x}|
	\eta^{|\alpha|+ k}(\alpha! k!)^{-s}.
	\end{align*}
	
	On the other hand, it follows from \eqref{ineq 1} and \eqref{ineq 2} that
	\begin{align*}
	&|\partial^{\alpha}_t (e^{i \xi A(t)} \varphi(t)) | \,\,
	|\partial^k_x e^{-i \xi x}| \,\,
	\eta^{|\alpha|+ k}(\alpha! k!)^{-s} \\[2mm]
	&= |e^{-\frac{\epsilon}{2} |\xi|^{{1}/{s}}} \partial^{\alpha}_t (e^{i \xi A(t)} \varphi(t))| \,\,
	|\xi|^k e^{-\frac{\epsilon}{2} |\xi|^{{1}/{s}}} \,\,
	\eta^{|\alpha|+ k}(\alpha! k!)^{-s} e^{\epsilon |\xi|^{{{1}/{s}}}}  \\[2mm]
	&\leqslant  \widetilde{C}^{|\alpha|}_{\epsilon, h}(\alpha!)^s\| \varphi \|_{s, h}
	\overline{C}_{\epsilon}^{k}(k!)^s
	\eta^{|\alpha|+ k}(\alpha! k!)^{-s} e^{\epsilon |\xi|^{ { 1 }/{ s } } } \\[2mm]
	&\leqslant  \max \{\widetilde{C}_{\epsilon, h}, \overline{C}_{\epsilon} \}^{|\alpha| + k} \,\, \eta^{|\alpha| + k}
	\| \varphi \|_{s, h}e^{\epsilon |\xi|^{ { 1 }/{ s } } } \\[2mm]
	& =\; \| \varphi \|_{s, h}e^{\epsilon |\xi|^{ { 1 }/{ s } } },
	\end{align*}
	for all $(t,x)\in\mathbb{T}^{n+1}$ and $(\alpha, k)\in\mathbb{N}_{0}^{n+1}.$
	
	The previous inequalities imply	that
	$$
	|\langle \widehat{u}(t,\xi) e^{i \xi A(t)}, \varphi(t) \rangle| \leqslant  \dfrac{C_\eta}{2\pi} \| \varphi \|_{s, h}
	e^{\epsilon |\xi|^{ { 1 }/{ s } } },
	$$
	thereby obtaining  $Tu \in D'_{s}(\mathbb{T}^{n+1})$.
	
	Analogously,  $T^{-1} : D'_{s}(\mathbb{T}^{n+1}) \to D'_{s}(\mathbb{T}^{n+1})$ is well defined and it is given by
	$$
	u = \sum_{\xi \in \mathbb{Z}} \widehat{u}(t,\xi)e^{i \xi x} \mapsto T^{-1}u = \sum_{\xi \in \mathbb{Z}} \widehat{u}(t,\xi)e^{- i\xi A(t)}e^{i \xi x}, \
	u \in D'_{s} (\mathbb{T}^{n+1}).
	$$
	
	Finally, in order to prove that  $T$ is also an automorphism in ${G^s(\mathbb{T}^{n+1})}$, it is sufficient  to verify that $G^s(\mathbb{T}^{n+1})$ is $T-$invariant and $T^{-1}-$invariant.
	
	Let $u \in G^s(\mathbb{T}^{n+1})$. We must show that there exist $C > 0$, $h > 0$ and $\epsilon > 0$ such that
	$$
	|\partial_t^{\alpha} \big(\widehat{u}(t,\xi)e^{ i \xi A(t) }\big)| \leqslant  C h^{|\alpha|}(\alpha!)^{s}e^{-\epsilon |\xi|^{{ 1 }/{ s} }},
	$$
	for all $\alpha \in \mathbb{N}_0^n$ and $(t,\xi) \in\mathbb{T}^{n}\times \mathbb{Z}.$

	If $u \in G^s(\mathbb{T}^{n+1})$, then there exist $C_0 > 0$, $h_0 > 0$ and $\epsilon_0 > 0$ such that
	$$
	|\partial_t^{\beta} \widehat{u}(t,\xi)| \leqslant  C_0 h_0^{|\beta|}(\beta!)^{s}e^{-\epsilon_0 |\xi|^{{ 1 }/{ s} }},
	$$
	for all $\beta \in \mathbb{N}_0^n$ and $(t,\xi) \in\mathbb{T}^{n}\times \mathbb{Z}.$
	
	Therefore,
	\begin{align*}
	|\partial_t^{\alpha} \big(\widehat{u}(t,\xi)e^{ i \xi A(t) }\big)| &\leqslant
	\sum_{\beta \leqslant  \alpha} \binom{\alpha}{\beta} |\partial^{\beta}\widehat{u}(t,\xi) | |\partial^{ \alpha - \beta} e^{ i \xi A(t) }|  \\
	&\leqslant  \sum_{\beta \leqslant  \alpha} \binom{\alpha}{\beta}  C_0 h_0^{|\beta|}(\beta!)^{s}e^{-({\epsilon_0}/{2}) |\xi|^{{{ 1 }/{ s}} }}
	|\partial^{ \alpha - \beta} e^{ i \xi A(t) }| e^{-({\epsilon_0}/{2}) |\xi|^{{{ 1 }/{ s}} }} \\
	&\leqslant  \sum_{\beta \leqslant  \alpha} \binom{\alpha}{\beta}  C_0 h_0^{|\beta|}(\beta!)^{s}e^{-({\epsilon_0}/{2}) |\xi|^{{{ 1 }/{ s}} }}
	C_{\epsilon}^{|\alpha - \beta|} ((\alpha - \beta)!)^s  \\
	&\leqslant  C_0 (2 h_0 C_\epsilon)^{|\alpha|} (\alpha!)^s e^{-({\epsilon_0}/{2}) |\xi|^{{{ 1 }/{ s}} }},
	\end{align*}
	for all $\alpha \in \mathbb{N}_0^n$ and $(t,\xi) \in\mathbb{T}^{n}\times \mathbb{Z}.$
	
	Analogously, $G^s(\mathbb{T}^{n+1})$ is $T^{-1}-$invariant. Also,  the following conjugation is an immediate consequence
	$$
	(\widetilde{L}_1, \dots, \widetilde{L}_n) = (T L_1 T^{-1}, \dots, T L_n T^{-1}).
	$$
	
\end{proof31}

\section{Sufficiency in Theorem \ref{main theorem}} \label{section-sufficiency}

Let $u \in D'_{s} (\mathbb{T}^{n+1})$ such that $L_k u = f_k \in G^s(\mathbb{T}^{n+1})$, for all $k = 1, \dots, n$. First, we will show that $u \in G^s(\mathbb{T}^{n+1})$ provided that there is  $b_j\not\equiv0$ such that $t_j \in \mathbb{T}^1 \mapsto b_j(t_j)$ does not change sign.

We may assume that $b_j(t_j)\leqslant  0$ and thus $b_{j0}<0$ (if $b_j(t_j)\geqslant  0$ the arguments are similar). Moreover, according to the previous section, we may assume that
$$
L_j = \dfrac{\partial}{\partial {t_j} } + (a_{j0} + ib_j(t_j))\dfrac{\partial}{\partial x }.
$$

Consider the formal $x-$Fourier series of $u$ and $f_j$ given by
$$
u(t, x) = \sum_{\xi \in \mathbb{Z}} \widehat{u}(t,\xi) e^{i \xi x}
\text{ \ and \ }
f_j(t, x) = \sum_{\xi \in \mathbb{Z}} \widehat{f}_{j}(t,\xi) e^{i \xi x}.
$$

Replacing these series in the equation $L_j u = f_j$ we obtain, for each $\xi\in\mathbb{Z}$, the ordinary differential equation
\begin{equation*}
(\partial_{t_j} + i\xi (a_{j0} + ib_j(t_j)) \widehat{u}(t,\xi) = \widehat{f}_j(t,\xi),
\end{equation*}
with $t\in\mathbb{T}^n$.

Since $b_{j0}\neq 0$, then for each $\xi \neq 0$  the equation above has a unique solution, which can be written in the following two equivalent ways:
\begin{equation}\label{solucao_eta_positivo}
\widehat{u}(t,\xi) = \dfrac{ 1 }{ 1 - e^{-i2\pi\xi c_{j0}} }\int_{0}^{2\pi} e^{-i\xi H(t_j, \tau)} \widehat{f}_j(t_1,\ldots,t_j-\tau,\ldots, t_n,\xi) d\tau,
\end{equation}
and
\begin{equation}\label{solucao_eta_negativo}
\widehat{u}(t,\xi) = \dfrac{ 1 }{ e^{i2\pi\xi c_{j0}} - 1 }\int_{0}^{2\pi} e^{ i\xi \widetilde{H}(t_j, \tau)} \widehat{f}_j(t_1,\ldots,t_j+\tau,\ldots, t_n,\xi) d\tau,
\end{equation}
where $c_{j0}=a_{j0}+ib_{j0}$,
$$
H(t_j, \tau) = a_{j0}\tau+i\int_{t_j - \tau}^{t_j} b_j(s) ds \quad \text{and} \quad \widetilde{H}(t_j, \tau) = a_{j0}\tau+i\int_{t_j}^{t_j+\tau} b_j(s) ds.
$$

To analyze the behaviour of the coefficients $\hat{u}(t,\xi)$, it is  convenient to choose the formula \eqref{solucao_eta_positivo} when $\xi\geqslant 1$, and the formula \eqref{solucao_eta_negativo} when $\xi\leqslant  -1$.

Let us start with the case $\xi\geqslant  1$. We remember that $b_{j0}<0$, thus it is possible to take $C_0>0$ such that
$$
|(1 - e^{-i2\pi\xi c_{j0}})^{-1}|\leqslant  C_0,  \mbox{ for } \xi \geqslant  1.
$$

Since $f_j\in G^{s}(\mathbb{T}^{n+1})$, there exist positive constants $C_1$, $h_1$ and $\epsilon$ such that
$$
|\partial_{t}^{\beta}\hat{f}_j(t,\xi)|\leqslant  C_1 h_1^{|\beta|}\beta!^{s}e^{-\epsilon |\xi|^{1/s}},
$$
for all $\beta \in\mathbb{N}_0^{n}$ and $(t,\xi)\in\mathbb{T}^{n}\times \mathbb{Z}.$

Observe that $\Im H(t_j, \tau)\leqslant  0$ for all $t_j,\tau\in[0,2\pi]$, since $b_{j}(t_j)\leqslant  0$.
Thus, by proceeding as in the proof of Lemma \ref{important lemma}, we obtain a constant $C_\epsilon>0$ such that
$$
e^{-(\epsilon/2)|\xi|^{1/s}}|\partial_{t_j}^{{m}} e^{-i\xi H(t_j, \tau)} |\leqslant  C_\epsilon^{{m}}({m}!)^{s},
$$
for all $t_j,\tau\in[0,2\pi],$ $m \in\mathbb{N}_0,$ and $\xi\geqslant  1$.

Now, if $\alpha=(\alpha_1,\ldots,\alpha_n)\in\mathbb{N}_0^{n}$, we write  $\beta=\alpha-\alpha_je_j$ and have
the following estimate to $|\partial_{t}^{\alpha} \widehat{u}(t,\xi)|$:
\begin{align*}
&
\left| ({ 1 - e^{-i2\pi\xi c_{j0}} })^{-1} \int_{0}^{2\pi} \partial_{t_j}^{\alpha_j}\left( e^{-i\xi H(t_j, r)}
\partial_t^{\beta}\widehat{f}_j(t_1,\ldots,t_j-r,\ldots, t_n,\xi) \right) dr \right| 				\\
&\leqslant  C_0 \int_{0}^{2\pi} \sum_{m =0}^{ \alpha_j} \binom{\alpha_j}{m} |\partial^{{m}}_{t_j} e^{-i\xi H(t_j, r)}|
|\partial_{t_j}^{\alpha_j - {m}} ( \partial_t^{\beta}\widehat{f}_{j}(t_j,\ldots,t_j-r,\ldots, t_n,\xi) )| dr \\
&\leqslant  C_0 \int_{0}^{2\pi} \sum_{m=0}^{\alpha_j} \binom{\alpha_j}{{m}}  C_\epsilon^{m} (m!)^s \; C_1h_1^{|\beta|+\alpha_j-m}(\alpha_j-m)!^{s}\beta!^{s}
e^{ -(\varepsilon/2) \xi^{{1}/{s}}} dr  \\
&\leqslant  C_0 C_1 (h_1+1)^{|\alpha|} (\alpha!)^s e^{ -({ \varepsilon }/{ 2 }) \xi^{ {1}/{s}} }
\int_{0}^{2\pi} \sum_{{m=0}}^{ \alpha_j} \binom{\alpha_j}{{m}} C_\epsilon^m dr\\
&\leqslant  2\pi C_0 C_1 [2 (C_\varepsilon+1) (h_1+1)]^{|\alpha|} (\alpha!)^s e^{ -({ \varepsilon }/{ 2 }) \xi^{{1}/{s}} },
\end{align*}
for all $t \in\mathbb{T}^{n},$ and $\xi\geqslant  1.$

When $\xi\leqslant  - 1$, by considering formula \eqref{solucao_eta_negativo}, we obtain a similar estimate to $\partial_t^{\alpha}\widehat{u}(t,\xi)$ as shown above. Also, for $\xi=0$ we have $\partial_{t_k}\hat{u}(t,0)=\hat{f}_k(t,0)\in G^{s}(\mathbb{T}_t^n)$, for all $k=1,\ldots, n,$ which implies that $\hat{u}(t,0)\in G^{s}(\mathbb{T}_t^n)$.

Therefore
$$
u(t, x) = \sum_{\xi\in\mathbb{Z}} \widehat{u} (t,\xi) e^{i\xi x} \in G^{s}(\mathbb{T}^{n+1}),
$$
and the sufficiency of condition (I) in  Theorem \ref{main theorem} is verified.
\vspace{0,5cm}

Suppose again   $u \in D'_{s} (\mathbb{T}^{n+1})$ and $L_j u = f_j \in G^s(\mathbb{T}^{n+1})$, for all $j = 1, \dots, n$. Additionally, assume that $J = \{j_1< \dots< j_\ell \}\neq \emptyset$ and the vector $a_{J0} = (a_{j_1 0}, \dots, a_{j_\ell 0})$ is neither rational nor exponential Liouville of order $s>1$.

By making changes of coordinates on the torus $\mathbb{T}^{n+1}$,  without loss of generality, we may assume that $J=\{1,\ldots,\ell\}$. Thus, we write the coordinates of $\mathbb{T}^{n+1}$ as $(t, x) = (t', t'', x)$ where $t'=(t_1,\ldots,t_\ell)$ and $t'=(t_{\ell+1},\ldots,t_n)$.

Consider the formal Fourier series of  $u$ and $f_j$ with respect to  the variables $(t',x)$ given by
\begin{eqnarray}
u(t,x) &=& \sum_{(\eta,\xi) \in \mathbb{Z}^{\ell} \times \mathbb{Z}} \widehat{u}(t'',\eta,\xi) e^{i( \eta\cdot t'+\xi x)}, \label{Fourier coefficients u}
\end{eqnarray}
and
\begin{eqnarray}
f_j (t,x) &=& \sum_{(\eta,\xi) \in \mathbb{Z}^{\ell} \times \mathbb{Z}} \widehat{f}_j(t'',\eta,\xi) e^{i( \eta\cdot t'+\xi x)}, \ j = 1, \dots, n. \label{Fourier coefficients fj}
\end{eqnarray}

Thanks to Section \ref{section-normal-form}, we may consider
$
L_j=\partial_{t_j}+a_{j0}\partial_x,\;j=1,\ldots, \ell,
$
then replacing the series \eqref{Fourier coefficients u} and \eqref{Fourier coefficients fj} in the equations $L_ju = f_j$,  $j=1,\ldots, \ell$, we obtain, for each  $(\eta,\xi)$ the following
$$
i(\eta_j + \xi a_{j0}) \widehat{u}(t'',\eta,\xi) = \widehat{f}_{j}(t'',\eta,\xi), \quad j=1, \dots, \ell.
$$

Since $a_{J0}$ is not rational, we consider for each $(\eta,\xi)\neq (0,0)$
$$
|\xi a_{M0}+\eta_M| =   \max_{1\leqslant  j\leqslant  \ell }|\xi a_{j0}+\eta_j|\neq 0.
$$

Therefore
\begin{equation}\label{solucao_para_bj_nulas_divido}
\widehat{u}(t'',\eta,\xi) = -i(\xi a_{M0}+\eta_M)^{-1} \widehat{f}_M(t'',\eta,\xi).
\end{equation}

On the other hand, there exist positive constants $C$, $h$ and $\epsilon$ such that
$$
|\partial_{t''}^{\alpha}\hat{f}_j(t'',\eta,\xi)|\leqslant  C h^{|\alpha|} \alpha!^s e^{-\epsilon|(\eta,\xi)|^{1/s}},
$$
for all $ t''\in\mathbb{T}^{n-\ell}$, $\alpha\in\mathbb{N}_0^{n-\ell}$, $(\eta,\xi)\in\mathbb{Z}^{\ell+1}$ and $ j=1,\ldots,\ell.$

Also, since $a_{J0}$ is not an exponential Liouville vector of order $s$, there exists $C_\epsilon >0$ such that
$$
\max_{1\leqslant j \leqslant \ell}|\xi a_{j0}+\eta_j|\geqslant  C_\epsilon e^{-(\epsilon/2) |\xi|^{1/s}},
$$
for all $(\eta,\xi)\in\mathbb{Z}^{n+1}$.

By replacing the previous estimates in \eqref{solucao_para_bj_nulas_divido} we obtain
\begin{eqnarray}\label{estimate not zero}
|\partial_{t''}^{\alpha}\widehat{u}(t'',\eta,\xi)|\leqslant  C_\epsilon^{-1}Ch^{|\alpha|}(\alpha!)^s e^{-(\epsilon/2)|(\eta,\xi)|^{1/s}},
\end{eqnarray}
for all $ t''\in\mathbb{T}^{n-\ell}$, $\alpha\in\mathbb{N}_0^{n-\ell}$ and $(\eta,\xi)\in\mathbb{Z}^{\ell+1}\setminus\{(0,0)\}$.

If $\ell=n$, then we take the Fourier coefficients in the series \eqref{Fourier coefficients u} with respect to the variables $(t,x)$ and, in this case, the inequality \eqref{estimate not zero} reduces to
$$
|\widehat{u}(\eta,\xi)|\leqslant  C_\epsilon^{-1}C e^{-(\epsilon/2)|(\eta,\xi)|^{1/s}},
$$
for all $(\eta,\xi)\in\mathbb{Z}^{n+1}$, by taking a larger $C$ if necessary. Thus, $u\in G^{s}(\mathbb{T}^{n+1})$.

If $1\leqslant \ell<n$, it follows from \eqref{estimate not zero} that
$$
v\doteq u-\hat{u}(t'',0,0)\in G^{s}(\mathbb{T}^{n+1}),
$$
where $u$ is given by \eqref{Fourier coefficients u}.

Thus, for each $j=\ell+1\ldots, n$ we have
$$L_j v = L_j u - \partial_{t_j}\hat{u}(t'',0,0)=f_j-\partial_{t_j} \hat{u}(t'',0,0).$$

Then $\partial_{t_j} \hat{u}(t'',0,0) = f_j-L_j v\in G^{s}(\mathbb{T}^{n+1})$, which implies that $\hat{u}(t'',0,0)\in G^{s}(\mathbb{T}_{t''}^{n})$.

Therefore, we may conclude that the inequality \eqref{estimate not zero} holds for all $(\eta,\xi)\in\mathbb{Z}^{n+1}$ and the proof of the sufficiency is complete.

\section{Necessity in Theorem \ref{main theorem}} \label{section-necessity}

The goal of this section is to present singular solutions to the system \eqref{system-main-thm} when the conditions (I) and (II) in Theorem \ref{main theorem} fail.

We split the proof in two cases depending on the existence or not of real vector fields in the system \eqref{system-main-thm}.

\subsection{System \eqref{system-main-thm} has no real vector fields ($J=\emptyset$)}\label{section no real vector fields}

We argue by contradiction. Assume that $b_j \not\equiv 0$ changes sign for all $j=1,\ldots, n$. In this situation, it is well known that the operators
$$L_j=\partial_{t_j}+(a_{j0}+ib_j(t_j))\partial_x,$$
are not globally $s-$hypoelliptic on $\mathbb{T}_{t_j,x}^2$ (see Proposition 1.3 in \cite{GPY93}).
This means that it is possible to find $u_j \in D'_s(\mathbb{T}^2)\setminus G^s(\mathbb{T}^2)$ such that $L_j u \in G^s(\mathbb{T}^2)$, for each $j=1,\ldots,n$.

The problem here is how to obtain a solution $u(t_1,\ldots,t_n, x) \in D'_s(\mathbb{T}^{n+1})\setminus G^s(\mathbb{T}^{n+1})$ for the  system \eqref{system-main-thm} from these singular solutions $u_j(t_j,x)$.

In Propositions \ref{lema_sol_singular_campo_em_R2_c0_racional} and \ref{lema_sol_singular_campo_em_R2_c0_irracional} we construct special singular solutions $u_j\in D_s'(\mathbb{T}^2)\setminus G^s(\mathbb{T}^2)$ for the vector fields $L_j$. Next, from these solutions, we obtain a singular solution to the system \eqref{system-main-thm}.

\begin{proposition}\label{lema_sol_singular_campo_em_R2_c0_racional}
	Consider the vector field
	$$L=\partial_t + (a_0 + ib(t))\partial_x,$$
	defined on $\mathbb{T}^2,$ where $a_0=p/q\in\mathbb{Q}$ and $b \in G^s(\mathbb{T}^1_t)$, $s>1$,  is a real-valued function with average $b_0=0$. Then there is
	$$
	u(t,x) = \sum_{k\in\mathbb{N}} \widehat{u}(t,qk) e^{i qk x} \in D'_{s}(\mathbb{T}^2),
	$$
	satisfying:
	\begin{enumerate}
		\item[$(i)$] $Lu = 0;$
		\item[$(ii)$] there is $t_0\in\mathbb{T}^1$ such that $|\widehat{u}(t_0,qk)| = 1$, for all $k\in\mathbb{N}$;
		\item[$(iii)$] for every $\epsilon > 0$,   there exist $C > 0$ and $h > 0$ such that
		$$
		|\partial^{\alpha}_{t} \widehat{u} (t,qk)|e^{-\epsilon |qk|^{{1}/{s} }} \leqslant  C h^{\alpha} (\alpha!)^s,
		$$
		for all $t \in \mathbb{T}^1,$ $\alpha  \in \mathbb{N}_0$ and $k \in \mathbb{N}$.
	\end{enumerate}
	In particular, $u \in D'_{s}(\mathbb{T}^2) \setminus G^s(\mathbb{T}^2)$ is a singular solution of $L$.
\end{proposition}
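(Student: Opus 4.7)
The plan is to build the required solution as a one-sided Fourier series in $x$ with carefully chosen coefficients. Let $B(t)=\int_0^t b(s)\,ds$. Since $b$ has zero mean, $B$ is $2\pi$-periodic and belongs to $G^s(\mathbb{T}^1)$. Let $M=\max_{t\in\mathbb{T}^1} B(t)$, attained at some $t_0$, and define (taking $q>0$ without loss of generality)
$$
\widehat{u}(t,qk)=e^{-ipkt+qk(B(t)-M)},\quad k\in\mathbb{N},\qquad u(t,x)=\sum_{k\in\mathbb{N}}\widehat{u}(t,qk)\,e^{iqkx}.
$$
Since $pk\in\mathbb{Z}$ and $B$ is periodic, each coefficient is $2\pi$-periodic in $t$, and the sign condition $B-M\leqslant 0$ together with $qk>0$ gives the uniform bound $|\widehat{u}(t,qk)|\leqslant 1$.

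Properties $(i)$ and $(ii)$ will be essentially automatic. Differentiating and using $qa_0=p$, I get
$$
\partial_t\widehat{u}(t,qk)=(-ipk+qk\,b(t))\widehat{u}(t,qk)=-iqk(a_0+ib(t))\widehat{u}(t,qk),
$$
so each mode is annihilated by $L$, proving $(i)$; and evaluating at $t_0$ yields $|\widehat{u}(t_0,qk)|=e^{qk(B(t_0)-M)}=1$, giving $(ii)$.

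The technical core is $(iii)$. I would factor $\widehat{u}(t,qk)=e^{iqk\phi(t)}$ with $\phi(t)=-a_0 t + i(M-B(t))$, so that $\Im\phi(t)\geqslant 0$ and the derivatives $\partial_t^\ell\phi$ satisfy a uniform Gevrey estimate $|\partial_t^\ell\phi(t)|\leqslant Dh_0^\ell(\ell!)^s$ for all $\ell\geqslant 1$ (since $\phi+a_0 t$ is Gevrey and the affine term contributes only at order one). Faà di Bruno's formula applied to $e^{iqk\phi(t)}$ then produces exactly the combinatorial expansion treated in the proof of Lemma \ref{important lemma}, with the uniform prefactor bound $|e^{iqk\phi(t)}|=e^{-qk\Im\phi(t)}\leqslant 1$ playing the role of the trivial bound $|e^{i\xi A(t)}|=1$ used there. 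Combining this with the absorption $(qk)^m e^{-\epsilon(qk)^{1/s}}\leqslant (s/\epsilon)^{sm}(m!)^s$ and with Lemmas \ref{lema_estimativa_produtorio_vzs_fatorial} and \ref{lema_sominha_maluca}, in the same sequence as in Lemma \ref{important lemma}, yields the required estimate in $(iii)$.

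Finally, from $(iii)$ with $\alpha=0$ and the uniform bound $|\widehat{u}(t,qk)|\leqslant 1$, the series defining $u$ converges in $D'_s(\mathbb{T}^2)$; while $(ii)$ prevents $u\in G^s(\mathbb{T}^2)$, since any Gevrey function would force $|\widehat{u}(t_0,qk)|\leqslant Ce^{-\epsilon(qk)^{1/s}}$, contradicting $|\widehat{u}(t_0,qk)|=1$. The only genuine subtlety is the sign in the phase $\phi$: the choice $\Im(qk\phi)\geqslant 0$ is what makes $|e^{iqk\phi(t)}|$ uniformly bounded, and this forces the coefficients to peak precisely at maxima of $B$. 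Once this choice is fixed, the rest is a direct reprise of the bookkeeping in Lemma \ref{important lemma}.
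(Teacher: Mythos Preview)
Your proof is correct and follows essentially the same approach as the paper: you define the identical coefficients $\widehat{u}(t,qk)=e^{-iqka_0 t}e^{qk(B(t)-\max B)}$ (your $e^{-ipkt}$ equals $e^{-iqka_0 t}$ since $qa_0=p$), note that $(i)$ and $(ii)$ are immediate, and obtain $(iii)$ by invoking the Fa\`a di Bruno argument from Lemma~\ref{important lemma}, exactly as the paper does. Your write-up is in fact more detailed than the paper's, which simply states that $(iii)$ ``follows the same ideas of the proof of Lemma~\ref{important lemma}.''
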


\begin{proof}	
	For each $k\in\mathbb{N}$ define
	$$\widehat{u}(t,qk)=e^{-iqk a_0 t}e^{qk (B(t)-B(t_0))}, \ t \in \mathbb{T}^1,$$
	where $B\in  G^s(\mathbb{T}^1)$ is such that $ B'=b$ and $B(t_0)=\max\{B(t):\; t\in \mathbb{T}^1\}$.
	
	The properties $(i)$ and $(ii)$ are immediate and the conclusion of $(iii)$ follows the same ideas of the proof of Lemma \ref{important lemma}.
	
	%
	
\end{proof}

\begin{proposition}\label{lema_sol_singular_campo_em_R2_c0_irracional}
	Consider the vector field
	$$L=\partial_t + (a_0 + ib(t))\partial_x,$$
	defined on $\mathbb{T}^2$, where $b\in G^s(\mathbb{T}^1_t)$, $s>1$, is a nonzero real-valued function and $a_0\in\mathbb{R}$. If $b$ changes sign and  $ a_0 + ib_0 \notin \mathbb{Q}$, then there is
	$$
	u(t,x) = \sum_{\xi\in\mathbb{N}} \widehat{u}(t,\xi)e^{i \xi x} \in D'_{s}(\mathbb{T}^2),
	$$
	satisfying:
	\begin{enumerate}
		\item[$(i)$] $Lu \in G^s(\mathbb{T}^2)$;
		
		\item[$(ii)$] there exist $t_0 \in \mathbb{T}^1$ and $C > 0$ such that, for any $\xi \in \mathbb{N}$,
		$$
		|\widehat{u}(t_0,\xi)| \geqslant  \dfrac{ C }{ \sqrt{\xi} };
		$$
		
		\item[$(iii)$] for every $\epsilon > 0$  there exist $C > 0$ and $h > 0$ such that
		$$
		|\partial^{\alpha}_{t} \widehat{u}(t,\xi)|e^{-\epsilon |\xi|^{ {1}/{s} }} \leqslant  C h^{\alpha} (\alpha!)^s,
		$$
		for all $t \in \mathbb{T}^1,$ $\alpha \in \mathbb{N}_0$ and $\xi \in \mathbb{N}.$
	\end{enumerate}
	In particular, $u \in D'_{s}(\mathbb{T}^2) \setminus G^s(\mathbb{T}^2)$ is a singular solution of $L$.
\end{proposition}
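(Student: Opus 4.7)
The plan is to construct $\widehat{u}(t,\xi)$ by localizing a (locally valid) homogeneous solution of $L$ with a Gevrey cutoff placed at a strict local maximum of the primitive $B(t) = \int_0^t b(s)\,ds$. Since $b$ is Gevrey, real-valued, nonzero, and changes sign, one can locate a point $t_0 \in \mathbb{T}^1$ and a radius $\delta \in (0, \pi)$ such that $B(t_0) > B(t)$ for $0 < |t - t_0| \leqslant \delta$ and, by compactness, $B(t_0) - B(t) \geqslant c_0 > 0$ on the annulus $\delta/2 \leqslant |t - t_0| \leqslant \delta$. Concretely, one picks $t_0$ at a zero of $b$ where $b$ passes from nonnegative to nonpositive values, choosing $\delta$ larger than any adjacent plateau of $b$. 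Fix a Gevrey cutoff $\chi$ with compact support in $(-\delta,\delta)$ and with $\chi \equiv 1$ on $[-\delta/2, \delta/2]$, and set
$$
\widehat{u}(t,\xi) = \frac{1}{\sqrt{\xi}}\,\chi(t - t_0)\,\Psi_\xi(t), \qquad \Psi_\xi(t) := e^{\xi(B(t) - B(t_0)) - i\xi a_0 (t - t_0)}, \qquad \xi \in \mathbb{N}.
$$
Since $\chi(\cdot - t_0)$ is compactly supported inside one period, this descends to a well-defined $2\pi$-periodic function of $t$, even though $\Psi_\xi$ itself is not periodic when $c_0 = a_0 + ib_0 \notin \mathbb{Q}$.

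Property (ii) is immediate, as $\widehat{u}(t_0,\xi) = 1/\sqrt{\xi}$. For (i), a direct computation gives $L\Psi_\xi = 0$, so
$$L\widehat{u}(t,\xi) = \xi^{-1/2}\, \chi'(t-t_0)\,\Psi_\xi(t);$$
since $|\Psi_\xi(t)| = e^{\xi(B(t) - B(t_0))} \leqslant e^{-c_0 \xi}$ on supp $\chi'$, the right-hand side (together with its $t$-derivatives) is exponentially small in $\xi$, and summing yields $Lu \in G^s(\mathbb{T}^2)$. The core of the argument is (iii). Writing $\Psi_\xi(t) = e^{i\xi \widetilde{A}(t)}$ with the complex-valued Gevrey phase
$$
\widetilde{A}(t) = -a_0(t - t_0) - i\bigl(B(t) - B(t_0)\bigr),
$$
defined on the compact support of $\chi(\cdot - t_0)$, and adapting the Fa\`a di Bruno argument from Lemma \ref{important lemma} together with Lemmas \ref{lema_estimativa_produtorio_vzs_fatorial} and \ref{lema_sominha_maluca}, one arrives at
$$
|\partial_t^\gamma \Psi_\xi(t)| \;\leqslant\; |\Psi_\xi(t)|\,C_\epsilon^\gamma (\gamma!)^s\,e^{\epsilon \xi^{1/s}}.
$$
The bound $|\Psi_\xi(t)| \leqslant 1$ on supp $\chi(\cdot - t_0)$, combined with Leibniz and the Gevrey bounds on $\chi$, then produces (iii). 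Finally, (iii) shows $u \in D'_s(\mathbb{T}^2)$, while (ii) forbids $u \in G^s(\mathbb{T}^2)$, since Gevrey regularity would force $|\widehat{u}(t_0,\xi)| \lesssim e^{-\epsilon\xi^{1/s}}$.

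The main obstacle is the Gevrey estimate in (iii), which is a mild but honest extension of Lemma \ref{important lemma}: the phase $\widetilde{A}$ is now complex-valued, and its imaginary part is responsible for the multiplicative factor $|\Psi_\xi|$ that appears when Faà di Bruno is applied. This is exactly why we normalize by $\xi^{-1/2}$ relative to $\Psi_\xi(t_0) = 1$ — the bound $|\Psi_\xi|\leqslant 1$ on supp $\chi$ substitutes for the cancellation $|e^{i\xi A}|=1$ used in Lemma \ref{important lemma}. A secondary subtlety is the selection of $t_0$ and $\delta$ in the presence of possible Gevrey plateaus of $b$ adjacent to $t_0$, but this is combinatorial and robust.
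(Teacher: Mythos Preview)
Your approach is correct and genuinely different from the paper's. The paper proceeds indirectly: it first constructs $f\in G^s(\mathbb{T}^2)$ by placing a Gevrey cutoff $\varphi$ near the point $t_0-r_0$ where the function $(t,r)\mapsto\int_{t-r}^t b$ attains its maximum $B_0$, then solves $Lu=f$ via the integral formula $\widehat u(t,\xi)=(1-e^{-2\pi i\xi c_0})^{-1}\int_0^{2\pi}e^{-i\xi H(t,r)}\widehat f(t-r,\xi)\,dr$ (which is where the hypothesis $c_0\notin\mathbb{Q}$ enters), and finally extracts the lower bound $|\widehat u(t_0,\xi)|\geqslant C/\sqrt\xi$ by a Laplace--Taylor argument around the interior critical point $r_0$; the cases $b_0\leqslant 0$ and $b_0>0$ are handled separately. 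Your construction is more direct: you cut off a locally defined homogeneous solution $\Psi_\xi$ near a local maximum of $B$, so that $Lu$ is supported on $\operatorname{supp}\chi'$ where $|\Psi_\xi|$ is exponentially small. This avoids the integral representation entirely, makes no use of $c_0\notin\mathbb{Q}$, requires no case split on the sign of $b_0$, and in fact yields the stronger bound $|\widehat u(t_0,\xi)|=\xi^{-1/2}$ (the normalisation is cosmetic; dropping it gives $|\widehat u(t_0,\xi)|=1$). The paper's route, on the other hand, reuses the same solution formula that drives the sufficiency proof, so it fits more organically into the overall narrative.

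One small imprecision: your claim that $B(t_0)>B(t)$ for all $0<|t-t_0|\leqslant\delta$ can fail when $b\in G^s$ (not analytic) vanishes on an interval, producing a plateau of $B$ at its maximum. What your argument actually needs is only $B(t)\leqslant B(t_0)$ on $\operatorname{supp}\chi(\cdot-t_0)$ together with $B(t)\leqslant B(t_0)-c_0$ on $\operatorname{supp}\chi'(\cdot-t_0)$; this weaker requirement is always achievable by centring $t_0$ on the plateau and choosing $\chi\equiv 1$ on the whole plateau, exactly as your closing remark suggests.
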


\begin{proof}
	We start by considering $b_{0}\leqslant 0$. In this case, we define the function
	$$
	H(t, r) =
	a_0r + i\int_{t-r}^{t} b(y) dy,\quad  0\leqslant t,r\leqslant  2\pi,
	$$
	and set
	$$
	B_0 =
	\max_{t, r \in [0, 2\pi]} \int_{t-r}^{t}b(y)dy = \int_{t_0-r_0}^{t_0} b(y) dy.
	$$
	Since $b(t)$ changes sign, we have $B_0 > 0$ and $r_0 \in (0, 2\pi)$. By performing a translation in the variable $t$ (if necessary), we may assume $0< t_0, r_0, t_0 - r_0 < 2\pi$.
	
	Let $\delta >0$ such that $I_\delta=[t_0 - r_0 - \delta, t_0 - r_0 + \delta] \subset (0, 2\pi)$ and consider $\varphi \in G_c^s((0, 2\pi))$ such that
	$\mbox{supp}(\varphi)$ is a compact subset of $I_\delta$, $\varphi(t) = 1$, for all $t \in [t_0 - r_0 - \frac{\delta}{2}, t_0 - r_0 + \frac{\delta}{2}]$, and $0 \leqslant  \varphi(t) \leqslant  1$, for all $t$.
	
	Consider  the formal series
	$$
	f(t,x) = \sum_{\xi \in\mathbb{N}} \widehat{f}(t,\xi) e^{i \xi x},
	$$
	where, for each $\xi\in\mathbb{N}$, the $x-$Fourier coefficient $\widehat{f}(t,\xi)$ is the periodic extension of the function
	$$
	(1 - e^{-i2\pi\xi c_0})e^{-B_0 \xi}  e^{-i \xi a_0 (t - t_0)}\varphi(t), \quad t\in [0, 2\pi].
	$$
	
	Since $b_0\leqslant 0$, we have $|(1 - e^{-i2\pi\xi c_0})|\leqslant 2$, for all $\xi\in \mathbb{N}$.
	
	If $a_0=0$ then immediately we have $f\in G^s(\mathbb{T}^2)$. Otherwise, let $C >0$ and $h>1$ be constants such that $ | \varphi^{(k)} (t)| \leq C h^{\alpha}(k!)^{s}$, for all $t \in [0, 2\pi]$ and $k \in \mathbb{N}_0$. Also, by considering $C_0=\frac{2}{B_0}$ we obtain $\xi^ke^{-\frac{B_0}{2}\xi}\leqslant C_0^k k!$, for all $\xi\in\mathbb{N}$ and $k\in\mathbb{N}_0$.
	
	Given $\alpha \in \mathbb{N}_0$ we have
	\begin{align}\label{ineq 3}
	|\partial_{t}^{\alpha} \widehat{f} (t,\xi)| 
	&\leqslant 2 e^{-B_0 \xi }
	\sum_{k=0}^{ \alpha} \dfrac{\alpha!}{{k}! (\alpha - {k})!} |\partial_{t}^{\alpha - {k}} \varphi(t)| |\partial_{t}^{{k}}e^{-i \xi a_0 t}|  \nonumber\\
	&\leqslant 2 e^{-B_0 \xi }
	\sum_{{k} =0}^{ \alpha} \dfrac{\alpha!}{{k}! (\alpha - {k})!} C h^{\alpha - {k}} (\alpha - {k})!^s \xi^k |a_0|^{k}  \nonumber\\	
	&\leqslant 2 C e^{-\frac{B_0}{2} \xi}
	\sum_{{k} =0}^{ \alpha} \dfrac{\alpha!}{{k}!}  h^{\alpha - {k}} (\alpha - {k})!^{s-1} C_0^k k! |a_0|^{k}  \nonumber \\
	&\leq 2 C (\alpha !)^s e^{-\frac{B_0}{2} \xi}
	\sum_{{k} =0}^{ \alpha}  h^{\alpha - {k}}    C_0^k  |a_0|^{k}  \nonumber \\
	&\leqslant 2 C [2h(1+C_0|a_0|)]^{\alpha} (\alpha!)^s e^{-\frac{B_0}{2} \xi^{1/s}},
	\end{align}
	for all $t \in [0,2\pi]$ and  $\xi\in\mathbb{N}$. Thus $f\in  G^s(\mathbb{T}^2)$.
	
	From now on, we will obtain  $u\in D'_{s}(\mathbb{T}^2) \setminus G^s(\mathbb{T}^2)$ such that $Lu=f$ and, additionally, $u$ satisfying the other conditions desired.

	Since $c_0=a_0+ib_0\notin \mathbb{Q}$, by taking the $x-$Fourier series in the equation $Lu=f$, we define $u$ as follows
	$$	
	u(t,x) = \sum_{\xi \in \mathbb{N}} \widehat{u}(t,\xi)  e^{i\xi x},
	$$
	where the $x-$Fourier coefficients are given by
	\begin{eqnarray}\label{coefficient u necessity}
	\widehat{u}(t,\xi)& \doteq & \dfrac{1}{1 - e^{-i2\pi \xi c_0}} \int_{0}^{2\pi}e^{-i\xi H(t, r)} \widehat{f}(t-r,\xi) dr \nonumber \\
	&=& e^{-i\xi a_0(t-t_0)} \int_{0}^{2\pi} e^{\xi(\Im H(t, r)-B_0)} \varphi(t-r) dr, \quad t \in \mathbb{T}^1.
	\end{eqnarray}
	
	Therefore, by definition we have $Lu=f$ and
	$
	|\langle \widehat u(\cdot,\xi), \phi \rangle |\leqslant (2\pi)^2 \|\phi\|_{\infty}
	$ for all $\phi\in  G^s(\mathbb{T}^1)$,
	which implies that $u\in D'(\mathbb{T}^2)$, in particular $u\in D_s'(\mathbb{T}^2)$.
	
	Consider the function $\psi(r) =  \Im H(t_0, r)- B_0$. Note that
	\begin{align*}
	|\widehat{u} (t_0,\xi)| 
	= \int_{r_0 - \delta}^{r_0 + \delta} e^{\xi \psi(r)} \varphi (t_0 - r) dr
	\geqslant \int_{r_0 - \frac{\delta}{2}}^{r_0 + \frac{\delta}{2}} e^{\xi \psi(r)} dr.
	\end{align*}
	Since  $\psi(r_0) = 0$ and $\psi'(r_0) = b(t_0 - r_0) = 0$, it follows from Taylor's formula that, for each $h \in (-\delta, \delta)$ there exists  $\theta(h)$ suh that
	$$
	\psi(r_0 + h) = \psi''(r_0+\theta(h))\frac{h^2}{2}.
	$$
	Thus, by taking
	$$
	A = \sup \left\{\left|\frac{1}{2}\psi''(t) \right|; t \in [r_0 - \delta, r_0 + \delta] \right\},
	$$
	there exists $C>0$ such that
	\begin{align*}
	|\widehat{u}(t_0,\xi)|\geqslant
	\int_{- \frac{\delta}{2}}^{\frac{\delta}{2}} e^{\xi \psi(r_0 + h)} dh
	&\geqslant \int_{- \frac{\delta}{2}}^{\frac{\delta}{2}} e^{-\xi Ah^2} dh
	= \int_{- \frac{\delta\sqrt{A}}{2}}^{\frac{\delta\sqrt{A}}{2}} e^{-\xi w^2}dw  \geqslant \dfrac{C}{\sqrt{\xi}},
	\end{align*}
	for all $\xi\in \mathbb{N},$ which implies that $u \notin G^s(\mathbb{T}^2)$.\\
	
	Finally, for every $\epsilon>0$ and $\alpha\in\mathbb{N}_0$ we obtain from  \eqref{coefficient u necessity} that
	\begin{align*}
	& |\partial^{\alpha}_{t} \widehat{u}(t,\xi)|e^{-\epsilon \xi^{ {1}/{s} }} \\[2mm]
	& \leqslant  \int_{0}^{2\pi} \left|\partial^{\alpha}_{t} \big( e^{\xi(\Im H(t, r)-B_0)} e^{-i \xi a_0 t} \varphi(t-r) \big)\right|e^{-\epsilon \xi^{ {1}/{s} }} dr  \\
	& \leqslant  \int_{0}^{2\pi} \sum_{k=0}^{\alpha} \dfrac{\alpha!}{k!(\alpha - k)!}
	\left|\partial^{\alpha - k}_{t} e^{\xi(\Im H(t, r)- B_0)}\partial^{k}_{t} \big(e^{-i \xi a_0 t} \varphi(t-r)\big)\right| e^{-{\epsilon} \xi^{ {1}/{s} }} dr.
	\end{align*}
	
	Since $\Im H(t, r)-B_0 \leqslant 0$, then by a calculus similar to the one made in the proof of Lemma \ref{important lemma},  there exists $C_0>0$ and $h_0>0$ such that
	$$
	e^{-(\epsilon/2) \xi^{ {1}/{s} }} |\partial^{\alpha - k}_{t} e^{\xi(\Im H(t, r)-B_0)}|\leqslant C_0 h_0^{\alpha-k}(\alpha-k)!^s.
	$$
	Also, analogous ideas to those made in \eqref{ineq 3} imply that there exists $C_1>0$ and $h_1>0$ such that
	$$
	e^{-(\epsilon/2) \xi^{ {1}/{s} }}| \partial^{k}_{t} \big(e^{-i \xi a_0 t} \varphi(t-r)\big)| \leqslant C_1 h_1^k (k!)^s.
	$$
	
	Therefore,
	\begin{align*}
	|\partial^{\alpha}_{t} \widehat{u}(t,\xi)|e^{-\epsilon \xi^{ {1}/{s} }}
	&\leqslant  \int_{0}^{2\pi} \sum_{k=0}^{\alpha} \dfrac{\alpha!}{k!(\alpha - k)!}
	C_0C_1h_0^{\alpha-k}h_1^{k}(\alpha-k)!^s k!^s dr\\
	& \leqslant 2\pi C_0C_1 h_2^{\alpha} \sum_{k=0}^{\alpha} {\alpha!}(k!(\alpha - k)!)^{s-1}\\
	&\leqslant 2\pi C_0C_1(2h_2)^\alpha (\alpha!)^s, \quad \forall  t\in\mathbb{T}^1,\; \;\forall \xi\in\mathbb{N}_0,
	\end{align*}
	where $h_2=\max\{1,h_0,h_1\}$.\\

	If $b_0>0$, the arguments are closely similar to the previous one. In this case, we set
	$$
	\widetilde{H}(t, r) =  a_0r + i\int_{t}^{t+r} b(y) dy,\quad  0\leqslant t, r  \leqslant 2\pi,
	$$
	and
	$$
	B_0 =  \min_{t, r \in [0, 2\pi]} \int_{t}^{t+r} b(y) dy = \int_{t_0}^{t_0 + r_0} b(y) dy.
	$$
	
	In this situation,  $B_0 < 0$ and we may assume $r_0, t_0, t_0 + r_0 \in (0, 2\pi)$.
	
	We take $\delta >0$ such that $I_\delta=[t_0 + r_0 - \delta, t_0 + r_0 + \delta] \subset (0, 2\pi)$ and consider $\varphi \in G_c^s((0, 2\pi))$ such that $\mbox{supp}(\varphi)$ is a compact subset of $I_\delta$, $\varphi(t) = 1$, for all $t \in [t_0 + r_0 - \frac{\delta}{2}, t_0 + r_0 + \frac{\delta}{2}]$ and $0 \leqslant  \varphi(t) \leqslant  1$, for all $t$.
	
	Under the previous notations we define
	
	$$
	f(t,x) = \sum_{\xi \in\mathbb{N}} \widehat{f}(t,\xi) e^{i \xi x},
	$$
	where for each   $\xi\in\mathbb{N}$ the coefficient  $\widehat{f}(t,\xi)$ is the periodic extension of
	$$
	(e^{ i\xi2\pi c_0} - 1)e^{\xi B_0} \varphi(t) e^{i \xi a_0 (t - t_0)}, \quad t \in [0, 2\pi].
	$$
	
	We conclude this case proceeding as in the previous one.
	
\end{proof}

Now we are ready to construct a singular solution $u\in D_s'(\mathbb{T}^{n+1})\setminus G^s(\mathbb{T}^{n+1})$ to the system \eqref{system-main-thm}.
Without loss of generality we may suppose $c_{j0} \notin \mathbb{Q}$, for all $j \in \{1, \dots, m\}$, $c_{j0} = {p_j}/{q_j}$, $p_j \in \mathbb{Z}$ and $q_j \in \mathbb{N}$ for
all $j \in \{m+1, \dots, n\}$.

For each $j=1,\ldots,n$, we consider
$$
u_j (t_j, x) = \sum_{\xi \in \mathbb{N}} \widehat{u}_{j} (t_j,\xi) e^{i \xi x} \in D'_{s}(\mathbb{T}^2) \setminus G^s(\mathbb{T}^2),
$$
which satisfies the conditions of Proposition \ref{lema_sol_singular_campo_em_R2_c0_irracional}  when $j=1,\ldots, m$ and the conditions of
Proposition \ref{lema_sol_singular_campo_em_R2_c0_racional} when $j=m+1,\ldots, n$.

We define $u\in D_s'(\mathbb{T}^{n+1})$ as follows
$$
u(t,x) = \sum_{k \in \mathbb{N}} \widehat{u}(t,qk)e^{iqk x}= \sum_{k \in \mathbb{N}} \left(\prod_{j=1}^{n}\widehat{u}_{j}(t_j,qk)\right) e^{iqk x},
$$
where  $q $ is a positive multiple of  $q_{m+1},\ldots, q_n$.

Since there exists $t_0=(t_{10}, \dots, t_{n_0})$ such that
\begin{eqnarray}\label{ineq 4}
|\widehat{u} (t_{0},qk)| = \prod_{j=1}^{m}|\widehat{u}_{j} (t_{j0},qk)| \geqslant  \left(\dfrac{C}{\sqrt{qk}}\right)^m, \quad \forall  k\in\mathbb{N},
\end{eqnarray}
we have $u \notin G^s(\mathbb{T}^{n+1})$.

Note that for each $j = 1, \dots, n$
$$
L_j u(t,x) = \sum_{k\in \mathbb{N}}\left(\prod_{\ell=1, \ell\neq j}^{n} \widehat{u}_{\ell}(t_\ell,qk)\right)\widehat{L_j u_j}(t_j,qk)
e^{i qk x}.
$$
Then, by Proposition \ref{lema_sol_singular_campo_em_R2_c0_racional}, for each $j=m+1,\ldots, n$ we have $L_ju=0$.

If $j=1,\ldots, m$, by Proposition \ref{lema_sol_singular_campo_em_R2_c0_irracional} we have $f_j(t_j,x)\doteq L_ju_j\in G^{s}(\mathbb{T}^2)$ and, consequently,  there exist positive constants $ \epsilon $, $h_j$ and $C_j$  such that
$$
|\partial^{\alpha_j}_{t_j} \widehat{f}_{j} (t_j,\xi)| \leqslant  C_j h_j^{\alpha_j} (\alpha_j!)^s e^{-\epsilon |\xi|^{ {1}/{s} }},
$$
for all $t_j \in \mathbb{T}_{t_j}^{1}, \, \alpha_j \in \mathbb{N}_0 $ and $ \xi \in \mathbb{N}.$

Finally, given $j\in\{1,\ldots, m\}$ we have
\begin{align*}
|\partial^{\alpha}_t \widehat{L_j u} (t,qk )| &= \left(\prod_{\ell=1, \ell\neq j}^{n} |\partial^{\alpha_\ell}_{t_\ell} \widehat{u}_{\ell} (t_\ell,qk)|\right)|\partial^{\alpha_j}_{t_j} \widehat{f}_{j } (t_j,qk)|\\
&\leqslant \left(\prod_{\ell=1, \ell\neq j}^{n} |\partial^{\alpha_\ell}_{t_\ell} \widehat{u}_{\ell} (t_\ell,qk)| e^{-\frac{\epsilon}{n} |qk |^{ {1}/{s} }}\right) C_j h_j^{\alpha_j} (\alpha_j!)^s e^{-\frac{\epsilon}{n} |qk |^{ {1}/{s} }} \\
&\leqslant \left(\prod_{\ell=1, \ell\neq j}^{n}  C_\ell h_\ell^{\alpha_\ell} (\alpha_\ell!)^{s} \right) C_j h_j^{\alpha_j} (\alpha_j!)^s e^{-\frac{\epsilon}{n} |qk |^{ {1}/{s} }} \\
&\leqslant \left(\prod_{\ell=1}^{n}  C_\ell \right) h^{|\alpha|} (\alpha!)^{s} e^{-\frac{\epsilon}{n} |qk |^{ {1}/{s} }},
\end{align*}
for all $t \in \mathbb{T}^n, k\in\mathbb{N}$ and $\alpha \in \mathbb{N}_0^n,$ where $h=\max\{-h_1,\ldots, h_n,1\}$.

Therefore, we conclude that  $L_ju \in G^s(\mathbb{T}^{n+1})$, for all $j = 1, \dots, n$.

\subsection{System \eqref{system-main-thm}  has real vector fields ($J\neq\emptyset$)}

Reordering the coordinates  in $\mathbb{T}^{n+1}$ if necessary,  we may assume that $J = \left\{ j \in \{1, \dots, n\}: \;b_j(t_j) \equiv 0 \right\} = \{1, \dots, \ell\}$ for some $1\leqslant \ell \leqslant n$.

Thus, in this section we consider that
\begin{itemize}
	\item[$\circ$] $b_j \equiv 0$ for all
	$j = 1, \dots, \ell$;
	\item[$\circ$] $b_j$ changes sign for all $j = \ell+1, \dots, n$;
	\item[$\circ$] $a_{J0}=(a_{10},\ldots,a_{\ell0})$ is either rational or an exponential Liouville vector of order $s>1$.
\end{itemize}

We  write  $(t, x) = (t', t'', x)\in \mathbb{T}^{n+1}$, where $t'=(t_1,\ldots, t_\ell)$ and $t''=(t_{\ell+1},\ldots,t_n)$. Also, we denote  $a_0'=a_{J0}$ and $a_0''=(a_{(\ell+1)0},\ldots,a_{n0})$.

Suppose first that $a_0'\in\mathbb{Q}^{\ell}$. Therefore $J\subset I$, where $I=\{j\in\{1,\ldots, n\}:\; a_{j0}\in\mathbb{Q}\}$.

Let $q$ be a positive integer such that $qa_{j0}\in\mathbb{Z}$, for all $j\in I$. It follows from Subsection \ref{section no real vector fields} that there exists
$$
v(t'',x)=\sum_{k\in\mathbb{N}} \widehat{v}(t'',q k)e^{iqk x}\in D_s'(\mathbb{T}^{n-\ell+1})\setminus  G^s(\mathbb{T}^{n-\ell+1}),
$$
such that  $g_j(t'',x)\doteq L_j v\in  G^s(\mathbb{T}^{n-\ell+1})$ for all $j=\ell+1,\ldots, n$.

We define
$$
u(t,x)=\sum_{k\in\mathbb{N}} \widehat{u}(t,q k)e^{iqk x} = \sum_{k\in\mathbb{N}} \widehat{v}(t'',q k)e^{-i qka_0'\cdot t'} e^{iqk x}.
$$
Thus, $u\in D'(\mathbb{T}^{n+1})$ and  since
$$
|\widehat{u} (t',t''_0,qk)|=|\widehat{v} (t_0'',qk)|  \geqslant  \left(\dfrac{C}{\sqrt{qk}}\right)^m, \quad k\in\mathbb{N},
$$
where $t_0''$ can be defined as in \eqref{ineq 4} and $0\leqslant m\leqslant n-\ell$ is the amount of irrational averages $a_{j0}$, with $j\in \{\ell+1,\ldots,n\}\setminus I $, we conclude  that $u\notin  G^s(\mathbb{T}^{n+1})$.

On the other hand, for each $k\in\mathbb{N}$, we have
$$
\widehat{f}_j(t,qk)\doteq\widehat{L_j u}(t,qk)=
\begin{cases}
0, & j=1,\ldots, \ell\\
e^{-i qka_0'\cdot t'} \widehat{g}_j(t'',qk), & j=\ell+1,\ldots, n,
\end{cases}
$$
which implies that $f\doteq L_ju\in G^{s}(\mathbb{T}^{n+1})$, $j=1,\ldots, n$. Indeed, for every $\epsilon>0$ consider a constant $C_{\epsilon}>0$ such that $|\xi|^{m}e^{-\frac{\epsilon}{2} |\xi|^{1/s}}\leqslant C_\epsilon^m m!^{s}$ for all $\xi\in\mathbb{Z}$ and $m\in\mathbb{N}$. Also, for each $j=\ell+1,\ldots,n$,
there exist positive constants $C, h$ and $\epsilon$ such that
\begin{eqnarray*}
	|\partial_t^{\alpha}\widehat{f}_j(t,qk)|&=&|qk|^{|\alpha'|}|\partial_{t''}^{\alpha''}\widehat{g}_j(t'',qk)|\\[2mm]
	&\leqslant & |qk|^{|\alpha'|}C h^{|\alpha''|}(\alpha''!)^s e^{-\epsilon |qk|^{1/s}} \\[2mm]
	&\leqslant& C C_\epsilon^{|\alpha'|} (|\alpha'|!)^s  h^{|\alpha''|}(\alpha''!)^s e^{-\frac{\epsilon}{2} |qk|^{1/s}} \\[2mm]
	&\leqslant& C C_\epsilon^{|\alpha'|} (\ell^{s})^{|\alpha'|}(\alpha'!)^s  h^{|\alpha''|}(\alpha''!)^s e^{-\frac{\epsilon}{2} |qk|^{1/s}} \\[2mm]
	&\leqslant& C h_1^{|\alpha|}\alpha!^se^{-\frac{\epsilon}{2} |qk|^{1/s}},
\end{eqnarray*}
for all $t\in\mathbb{T}^n,$ $k\in \mathbb{N}$ and $\alpha=(\alpha',\alpha'')\in \mathbb{N}_0^n$, where $h_1=\max\{1, C_\epsilon \ell^s, h\}$.

We will assume from now on that $a_0'\notin \mathbb{Q}^{\ell}$ is an exponential Liouville vector of order $s>1$.

\begin{lemma}\label{lema_sequencia_esperta_exp_liouville_ordem_s}
	If $\alpha = (\alpha_1, \dots, \alpha_\ell)\notin \mathbb{Q}^{\ell}$ is an exponential Liouville vector of order $s \geqslant  1$ and $q \in \mathbb{N}$, then there are
	$\epsilon > 0$ and a sequence $(p_{k}, q_k)_{k \in \mathbb{N}}$ in $ (q\mathbb{Z})^{\ell} \times (q \mathbb{N})$ such that $q_k < q_{k+1}$ and
	$$
	\max_{1 \leqslant  j \leqslant  \ell} | p_k^{(j)}+q_k\alpha_j  | \leqslant q
	e^{- \epsilon q_k^{ { 1 }/{ s } } },
	$$
	for all $ k \in \mathbb{N}.$
\end{lemma}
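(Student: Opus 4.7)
The plan is to rescale the infinitely many exponential Liouville approximations to make both numerator and denominator divisible by $q$. The hypothesis on $\alpha$ provides $\epsilon_0 > 0$ and infinitely many pairs $(p', q') \in \mathbb{Z}^\ell \times \mathbb{Z}$ satisfying $\max_{1 \leqslant j \leqslant \ell}|q' \alpha_j - p'_j| < e^{-\epsilon_0 |q'|^{1/s}}$. Setting $q_k = q q'_k$ and $p_k = -q p'_k$ and multiplying the Liouville inequality by $q$ will yield the required bound once I rewrite $|q'_k|^{1/s} = q^{-1/s} q_k^{1/s}$ and take $\epsilon = \epsilon_0 q^{-1/s} > 0$.

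The preparatory step is to arrange a subsequence $(p'_k, q'_k)$ with $q'_k$ positive and strictly increasing. First I discard the trivial solution $q' = 0$, which forces $p' = 0$; then for any remaining pair with $q' < 0$ I replace it by $(-p', -q')$, so the denominator is positive. Next, once $|q'|$ is large enough that $e^{-\epsilon_0|q'|^{1/s}} < 1/2$, the integer vector $p'$ in the approximation is uniquely determined by $q'$ (it must be the nearest integer vector to $q'\alpha$), so the infinitely many solutions necessarily contain infinitely many distinct positive values of $q'$, from which I extract a strictly increasing subsequence $q'_1 < q'_2 < \cdots$.

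With this subsequence in hand, the verification is a one-line computation. Setting $p_k = -q p'_k \in (q\mathbb{Z})^\ell$ and $q_k = q q'_k \in q\mathbb{N}$, the strict monotonicity $q_k < q_{k+1}$ is inherited from $q'_k < q'_{k+1}$, and
$$
\max_{1\leqslant j \leqslant \ell}|p_k^{(j)} + q_k \alpha_j| \;=\; q\,\max_{1\leqslant j \leqslant \ell}|q'_k \alpha_j - p'_{k,j}| \;<\; q\,e^{-\epsilon_0 q^{-1/s} q_k^{1/s}} \;=\; q\, e^{-\epsilon q_k^{1/s}},
$$
which is exactly the required estimate.

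Overall the argument is a routine rescaling of the Liouville inequality; the only point requiring even a small amount of thought is the extraction of the strictly increasing positive subsequence, and this becomes transparent once one observes that $p'$ is uniquely determined by $q'$ as soon as the right-hand side of the approximation inequality drops below $1$.
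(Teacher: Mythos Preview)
Your proof is correct and follows essentially the same approach as the paper: both rescale the infinitely many Liouville approximations by the factor $q$ and take $\epsilon = \epsilon_0 q^{-1/s}$ (the paper writes this as ``$\epsilon q^{1/s}\leqslant\delta$''). Your treatment is in fact slightly more careful than the paper's, which simply asserts that one may pass to a strictly increasing subsequence without pausing to explain why infinitely many distinct denominators occur; your observation that $p'$ is the unique nearest-integer vector once $e^{-\epsilon_0|q'|^{1/s}}<1/2$ makes this explicit.
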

\begin{proof}
	By Definition \ref{E^sL vector}, there exist $\delta > 0$ and a sequence $(r_{k}, s_{k})_{k \in \mathbb{N}} $ in $ \mathbb{Z}^n \times \mathbb{N}$ such that $s_k \to \infty$ and, for all $ k \in \mathbb{N}$,
	$$
	\max_{1 \leqslant  j \leqslant  \ell}| r_{k}^{(j)} + s_k\alpha_j | \leqslant e^{- \delta s_k^{ { 1 }/{ s } } }.
	$$
	
	By passing to a subsequence, if necessary,  we may assume that $(s_k)_{k \in \mathbb{N}}$ is strictly increasing. Let $\epsilon > 0$ such that $\epsilon q^{{1}/{s}} \leqslant  \delta$. Thus $-\delta s_k^{{1}/{s}} \leqslant - \epsilon ({q}s_k)^{{1}/{s}}$ and, by setting
	$(p_k,q_k) = ({q}r_k, {q}s_k)$, we obtain
	$$
	\max_{1 \leqslant  j \leqslant  \ell} | p_k^{(j)}+q_k\alpha_j  |\leqslant q  e^{- \delta s_k^{{ 1 }/{ s } } }\leqslant qe^{- \epsilon (q_k)^{{1}/{s}}},
	$$
	for all $k \in \mathbb{N}$.
	
\end{proof}

Let $q$ be a positive integer such that $qa_{j0}\in\mathbb{Z}$ for all $j\in I=\{j: a_{j0}\in\mathbb{Q}\} $. By  Subsection \ref{section no real vector fields} there exists  $v(t'',x)\in D_s'(\mathbb{T}^{n-\ell+1})\setminus  G^s(\mathbb{T}^{n-\ell+1})$
such that  $g_j(t'',x)\doteq L_j v\in  G^s(\mathbb{T}^{n-\ell+1})$ where for each coefficient $\widehat{v}(t'',\xi)\neq 0$ we have $\xi\in q\mathbb{N}$.

By taking the sequence $(p_{k}, q_k)$ obtained in Lemma \ref{lema_sequencia_esperta_exp_liouville_ordem_s}, we define
$$
u(t'',x)=\sum_{k\in\mathbb{N}} \widehat{u}(t'',q_k) e^{iq_k x} =\sum_{k\in\mathbb{N}} \widehat{v}(t'',q_k)e^{ip_k\cdot t'} e^{iq_k x}.
$$
Thus $u\in D_s'(\mathbb{T}^{n+1})\setminus G^s(\mathbb{T}^{n+1})$ and
$$
\widehat{f}_j(t,q_k)\doteq \widehat{L_ju}(t,q_k)=
\begin{cases}
i(p_k^{(j)}+a_{j0}q_k) \widehat{v}(t'',q_k) e^{ip_k\cdot t'},&  j=1,\ldots, \ell\\
\widehat{g}_j(t'',q_k) e^{ip_k\cdot t'}, & j=\ell+1,\ldots,n.
\end{cases}
$$

In order to conclude the proof in this case, it is enough to show that for each $j$
$$
f_j(t,x)=\sum_{k\in\mathbb{N}} \widehat{f}_j(t'',q_k) e^{iq_k x}\in G^{s}(\mathbb{T}^{n+1}).
$$

Consider first $j=1,\ldots, \ell$. By taking $\epsilon>0$, as in Lemma \ref{lema_sequencia_esperta_exp_liouville_ordem_s}, there exist  $C>0$ and $h>0$ such that
$$
e^{-\frac{\epsilon}{4} q_k^{1/s}}|\partial_{t''}^{\alpha''}\widehat{v}(t'',q_k)|\leqslant C (\alpha''!)^s h^{|\alpha''|}
$$
and there exists $C_\epsilon>0$ such that
$$
|p_k|^{|\alpha'|} e^{-\frac{\epsilon}{4}q_k^{1/s}}\leqslant C_\epsilon^{|\alpha'|} (\alpha'!)^s,
$$
for all $\alpha=(\alpha',\alpha'')\in\mathbb{N}_0^{n}$, $t''\in \mathbb{T}^{n-\ell}$ and $k\in\mathbb{N}$. Then
\begin{eqnarray*}
	|\partial_t^{\alpha}\widehat{f}_j(t,q_k)|&=& |(p_k^{(j)}+a_{j0}q_k) \partial_{t''}^{\alpha''} \widehat{v}(t'',q_k) \partial_{t'}^{\alpha'} e^{ip_k\cdot t'}| \\[2mm]
	&\leqslant& qe^{-\epsilon q_k^{1/s}} C (\alpha''!)^s h^{|\alpha''|} e^{\frac{\epsilon}{4} q_k^{1/s}} C_\epsilon^{|\alpha'|}  (\alpha'!)^s e^{\frac{\epsilon}{4}q_k^{1/s}}\\[2mm]
	&\leqslant& q C h_1^{|\alpha|}(\alpha!)^s  e^{-\frac{\epsilon}{4}q_k^{1/s} },
\end{eqnarray*}
for all $\alpha=(\alpha',\alpha'')\in\mathbb{N}_0^{n},$ $t\in \mathbb{T}^n$ and $k\in\mathbb{N},$ where $h_1=\max\{1,C_\epsilon, h\}$.

If  $j=\ell+1,\ldots, n$, there exist positive constants $C$, $h$ and $\epsilon$ such that
$$
|\partial_{t''}^{\alpha''}\widehat{g}_j(t'',q_k)|\leqslant C (\alpha''!)^s h^{|\alpha''|}e^{-{\epsilon} q_k^{1/s}},
$$
for all $\alpha''\in\mathbb{N}_0^{\ell-n}$, $t''\in \mathbb{T}^{n-\ell}$ and $k\in \mathbb{N}$. Also,  there exists $C_\epsilon>0$ such that
$$|p_k|^{|\alpha'|} e^{-\frac{\epsilon}{2}q_k^{1/s}}\leqslant C_\epsilon^{|\alpha'|} (\alpha'!)^s$$ for all $\alpha'\in\mathbb{N}_0^{\ell}$, $t'\in \mathbb{T}^{\ell}$ and $k\in \mathbb{N}$.
Therefore, proceeding as before, we obtain a similar estimate to $|\partial_t^{\alpha}\widehat{f}_j(t,q_k)|$ for $j=\ell+1,\ldots, n$.

\section{Examples and final remarks}\label{section-final-remarks}

The study of the global hypoellipticity of the system \eqref{system-main-thm} is included in \cite{BCM93}. The abstract result obtained in \cite{BCM93}, together with Theorem 2.2 in \cite{Hou79}, imply an analogue result to Theorem \ref{main theorem} simply by replacing the terms {\it global $s-$hypoellipticity}  by {\it global hypoellipticity}, and {\it exponential Liouville vector of order $s>1$} by {\it Liouville vector}.

In particular, if the system \eqref{system-main-thm}, with $a_j$ and $b_j$ in $G^{s}(\mathbb{T}^n)$, is globally hypoelliptic then it is also globally $s-$hypoelliptic for any $s\geqslant 1$. In this section we present examples in which the reciprocal of this result  does not hold.

We start by recalling that the convergents $p_n/q_n$ of a continued fraction $\alpha = [a_1, a_2, a_3, \dots]$ are given by the following recurrence relations:
$p_1 = 1$, $q_1 = a_1$, $p_2 = a_2$, $q_2 = a_2a_1 +1$ and for $n \geqslant  3$,
$$p_n = a_np_{n-1} + p_{n-2}, \quad q_n = a_nq_{n-1} + q_{n-2}.
$$
\begin{proposition}\label{thm convergents}
	The convergents $p_n/q_n$ of a continued fraction $\alpha = [a_1, a_2, a_3, \dots]$ satisfy the following conditions:
	\begin{enumerate}
		\item[$i.$] $\dfrac{1}{(a_{n+1} + 2)q_{n}^{2}} < \Big|\alpha - \dfrac{p_n}{q_n} \Big| < \dfrac{1}{(a_{n+1})q_{n}^{2}},$ for all $n\in\mathbb{N},$\\[1mm]
		\item[$ii.$] If $p, q \in \mathbb{N}$ and $q \leqslant  q_n$, then $|p_n - \alpha q_n| < |p - q\alpha|.$
	\end{enumerate}
\end{proposition}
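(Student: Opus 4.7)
\textbf{Proof plan for Proposition \ref{thm convergents}.} The plan is to use two standard tools from continued fraction theory: the determinant identity $p_n q_{n-1} - p_{n-1} q_n = (-1)^n$, which follows by induction directly from the recurrences defining $p_n, q_n$, and the representation of $\alpha$ via the complete quotient $\beta_{n+1} = [a_{n+1}, a_{n+2}, \ldots]$. As a preliminary one also needs $0 < q_{n-1} < q_n$, which follows inductively since $a_n \geqslant 1$.

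For part $(i)$, I would first establish by induction on $n$ that
$$
\alpha \;=\; \frac{\beta_{n+1}\, p_n + p_{n-1}}{\beta_{n+1}\, q_n + q_{n-1}}.
$$
Subtracting $p_n/q_n$ and applying the determinant identity yields
$$
\alpha - \frac{p_n}{q_n} \;=\; \frac{(-1)^n}{q_n\bigl(\beta_{n+1} q_n + q_{n-1}\bigr)}.
$$
Using $a_{n+1} < \beta_{n+1} < a_{n+1}+1$ together with $0 < q_{n-1} < q_n$ gives
$$
a_{n+1}\,q_n^{2} \;<\; q_n\bigl(\beta_{n+1} q_n + q_{n-1}\bigr) \;<\; (a_{n+1}+2)\,q_n^{2},
$$
and inversion produces the two-sided bound in $(i)$.

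For part $(ii)$, I would use the classical best-approximation argument. Given $(p, q)\in\mathbb{N}^2$ with $q \leqslant q_n$, the determinant identity guarantees that the linear system $p = u p_n + v p_{n-1}$, $q = u q_n + v q_{n-1}$ has a unique solution in integers $u, v$. Hence
$$
p - q\alpha \;=\; u(p_n - q_n \alpha) + v(p_{n-1} - q_{n-1}\alpha).
$$
A short case analysis then concludes: if $v=0$ then $u\geqslant 1$ and the claim is immediate; if $u=0$ then $v\geqslant 1$ and one invokes the fact that the error sequence $|p_k - q_k\alpha|$ is strictly decreasing (a consequence of the formula derived in $(i)$). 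In the remaining generic case, the constraint $q \leqslant q_n$ forces $u$ and $v$ to have opposite signs, since otherwise $|u q_n + v q_{n-1}| \geqslant q_n + q_{n-1} > q_n$. On the other hand, the sign-alternation of consecutive convergents, which follows from the factor $(-1)^n$ in the error formula of $(i)$, ensures that $p_n - q_n\alpha$ and $p_{n-1} - q_{n-1}\alpha$ have opposite signs. Thus the two summands combine constructively:
$$
|p - q\alpha| \;=\; |u|\,|p_n - q_n\alpha| + |v|\,|p_{n-1} - q_{n-1}\alpha| \;>\; |p_n - q_n\alpha|.
$$

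The main obstacle is the bookkeeping in part $(ii)$: verifying the sign-alternation of consecutive convergents and correctly handling the boundary sub-cases $u=0$ and $v=0$, so as to obtain strict inequality in every situation where $(p,q)\neq (p_n,q_n)$. Part $(i)$ is essentially algebraic once the complete-quotient representation is in place, with only minor care needed for the small-index base cases $n=1,2$, where the formulas must be checked directly against the initial conditions $p_1=1, q_1=a_1, p_2=a_2, q_2=a_2 a_1+1$.
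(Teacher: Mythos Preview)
The paper does not actually prove Proposition~\ref{thm convergents}: it is stated without proof as a classical fact from the theory of continued fractions (the book of Hardy--Wright \cite{HarWri08} appears in the bibliography, and the surrounding text begins with ``We start by recalling\ldots''). So there is no argument in the paper to compare against.

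Your plan is the standard textbook proof and is correct in substance. Two small remarks. First, with the paper's conventions $p_1=1$, $q_1=a_1$, $p_2=a_2$, $q_2=a_2a_1+1$ one computes $p_2q_1-p_1q_2=-1$, so the determinant identity reads $p_nq_{n-1}-p_{n-1}q_n=(-1)^{n+1}$ rather than $(-1)^n$; this affects only signs, not the magnitudes you use in $(i)$ or the sign-alternation you use in $(ii)$. Second, as you implicitly note, the strict inequality in $(ii)$ requires $(p,q)\neq(p_n,q_n)$; the paper's statement is slightly imprecise on this point, but in the only place it is invoked (the proof of Proposition~\ref{not exp Liouville of order s}) this exclusion is harmless.
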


In \cite{Green72}, the author showed that the continued fraction $\alpha = [10^{1!}, 10^{2!}, 10^{3!}, \dots]$ is a Liouville number, but is not an exponential Liouville number. In the next result we improve the one just mentioned.

\begin{proposition}\label{not exp Liouville of order s}
	The number $\alpha =  [10^{1!}, 10^{2!}, 10^{3!}, \dots]$ is Liouville but is not an exponential Liouville number of order $s$, for any $s\geqslant 1$.
\end{proposition}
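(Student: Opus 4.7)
The plan is to work directly with the convergents $p_n/q_n$ of $\alpha=[10^{1!},10^{2!},\dots]$, exploiting both parts of Proposition~\ref{thm convergents} together with sharp size estimates on $q_n$ coming from the recurrence. The first preparatory step is to show that
$$
10^{n!}\;\leqslant\; q_n \;\leqslant\; 10^{3\cdot n!}
\qquad \text{for all large } n.
$$
The lower bound follows by iterating $q_n \geqslant a_n q_{n-1}$ starting from $q_1 = a_1$, giving $q_n \geqslant \prod_{k=1}^n a_k = 10^{\sum k!} \geqslant 10^{n!}$. For the upper bound, $q_n \leqslant (a_n+1)q_{n-1}$ gives $q_n \leqslant \prod(a_k+1) \leqslant 2^n\, 10^{\sum k!}$, and one checks $\sum_{k=1}^n k! \leqslant 2\cdot n!$ (by factoring out $n!$ and dominating the rest), so the exponent is at most $3\cdot n!$ for $n$ large.

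For the Liouville property, Proposition~\ref{thm convergents}(i) yields
$$
\Big|\alpha - \tfrac{p_n}{q_n}\Big| \;<\; \frac{1}{a_{n+1}q_n^2} \;=\; \frac{10^{-(n+1)!}}{q_n^2}.
$$
Given any $N \in \mathbb{N}$, once $(n+1)!\geqslant 3N\cdot n!$, i.e.\ $n+1\geqslant 3N$, the factor $10^{-(n+1)!}$ dominates $q_n^{-N}$ because $\log(q_n^N)\leqslant 3N\cdot n!\log 10$. So $|\alpha - p_n/q_n|<q_n^{-N}$ for all large $n$, proving that $\alpha$ is a Liouville number.

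For the claim that $\alpha$ is not an exponential Liouville number of any order $s\geqslant 1$, fix $s$ and $\epsilon>0$. I will show that only finitely many $(p,q)\in\mathbb{Z}\times\mathbb{N}$ satisfy $|q\alpha-p|<e^{-\epsilon q^{1/s}}$. First I reduce to convergents via Proposition~\ref{thm convergents}(ii): if $(p,q)$ is such a pair with $q$ large and $n$ is chosen so that $q_n\leqslant q<q_{n+1}$, then applying (ii) with index $n+1$ (which forces $(p,q)\neq(p_{n+1},q_{n+1})$ since $q<q_{n+1}$) gives
$$
|q\alpha-p|\;>\;|p_{n+1}-\alpha q_{n+1}|\;>\;\frac{1}{(a_{n+2}+2)q_{n+1}}.
$$
(For $q=q_n$ itself the same bound follows, or one may use Proposition~\ref{thm convergents}(i) directly with index $n$.) So it suffices to verify
$$
(a_{n+2}+2)q_{n+1}\;\leqslant\; e^{\epsilon q_n^{1/s}}\qquad \text{for all large } n.
$$
Taking logarithms, the left-hand side is $O((n+2)!\log 10)$, growing like a factorial in $n$, while $q_n^{1/s}\geqslant 10^{n!/s}$, which grows at least like a single exponential in $n!$. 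Hence the inequality is satisfied for all $n$ past some threshold depending on $\epsilon$ and $s$. Therefore only finitely many $n$, and thus only finitely many $(p,q)$ (since $|q\alpha-p|<1$ allows at most a few $p$ per $q$), can satisfy the defining inequality, finishing the proof.

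The main obstacle is purely book-keeping: ensuring the correct direction of the best-approximation inequality in Proposition~\ref{thm convergents}(ii) is applied (with index $n+1$, not $n$) to produce a lower bound on $|q\alpha-p|$ rather than an upper one, and then matching the factorial growth $(n+2)!$ on the left against the doubly-fast growth $10^{n!/s}$ on the right. The size estimates on $q_n$ are essentially the only quantitative input, and the dominance $(n+1)!\gg n!$ arising from the choice of partial quotients $a_k=10^{k!}$ is what makes the argument go through for every $s\geqslant 1$.
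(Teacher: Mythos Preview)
Your proof is correct and shares the same skeleton as the paper's: both invoke Proposition~\ref{thm convergents}(i) for the Liouville half and for the lower bound on $|p_n-\alpha q_n|$, and both use Proposition~\ref{thm convergents}(ii) to pass from arbitrary $(p,q)$ to convergents, reducing everything to the comparison of a factorial-type quantity against $10^{n!/s}$. The execution differs in two places. First, you establish the explicit two-sided bound $10^{n!}\leqslant q_n\leqslant 10^{3\cdot n!}$, which makes the final growth comparison transparent; the paper instead works with the chain $(a_{n+1}+2)q_n\leqslant a_n^{n+2}q_n\leqslant q_n^{n+3}$ and then shows $q_n^{n+3}\leqslant e^{\epsilon q_{n-1}^{1/s}}$. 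Second, and more substantively, you bracket an arbitrary $q$ by $q_n\leqslant q<q_{n+1}$ and apply (ii) once with index $n+1$ to get a direct lower bound on $|q\alpha-p|$, whereas the paper first proves the auxiliary estimate $|p_n-\alpha q_n|\geqslant e^{-\epsilon q_{n-1}^{1/s}}$ for convergents and then runs a short bootstrap (if $|p-\alpha q|<e^{-\epsilon q^{1/s}}$ then $q>q_N$, hence $q>q_{N+1}$, etc.) to reach a contradiction. Your route is the more standard one and avoids the iteration; the paper's bootstrap is a slight detour but lands at the same place. You also supply the Liouville property directly rather than citing Hardy--Wright.
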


\begin{proof}
	The proof that $\alpha$ is a Liouville number can be found on page 162 in \cite{HarWri08}.
	
	Let  $p_n/q_n$ be the convergents of $\alpha = [10^{1!}, 10^{2!}, 10^{3!}, \dots]$. Let us prove that the following condition holds
	\begin{equation}\label{condicao_B}
	\forall \epsilon > 0,\; \exists N \in \mathbb{N}:\; n \geqslant  N \Rightarrow |p_n - \alpha q_n| \geqslant  e^{-\epsilon q_{n-1}^{{1}/{s} }}.
	\end{equation}
	
	Assume, for a moment, that \eqref{condicao_B} has been proved.  Then, it follows that
	\begin{equation}\label{condicao_A}
	\forall \epsilon > 0,\; \exists Q \in \mathbb{N}:\; p \in \mathbb{N}, q \geqslant  Q \Rightarrow |p - \alpha q| \geqslant  e^{-\epsilon q^{{1}/{s} }},
	\end{equation}
	which lets us thereby conclude that $\alpha$ is not an exponential Liouville number of order $s$.
	
	Indeed,  given $\epsilon>0$, by assuming \eqref{condicao_B} we consider $Q = q_{N - 1}$. Thus, if $p \in \mathbb{N}$ and $q \geqslant  Q$ we claim that $|p - \alpha q| \geqslant  e^{ -\epsilon q^{{1}/{s}}}$, otherwise
	$$
	|p - \alpha q| < e^{ -\epsilon q^{{1}/{s}} } \leqslant  e^{-\epsilon q_{N-1}^{ {1}/{s} }} \leqslant  |p_N - \alpha q_N|,
	$$
	which implies $q > q_{N}$ by  Proposition \ref{thm convergents}. Then
	$$
	|p - \alpha q| < e^{ -\epsilon q^{{1}/{s}} } \leqslant  e^{-\epsilon q_{N}^{ {1}/{s} }} \leqslant  |p_{N+1} - \alpha q_{N+1}|,
	$$
	resulting that $q > q_{N+1}$. By repeating this argument we obtain $q > q_{n}$ for all $n \geqslant  N$, which is a contradiction.
	
	We now proceed to prove condition \eqref{condicao_B}.
	By Proposition \ref{thm convergents} we have
	$$
	\dfrac{1}{(a_{n+1} + 2)q_n} \leqslant  |p_n - \alpha q_n|,\quad \forall n \in \mathbb{N},
	$$
	where $a_n=10^{n!}$.
	
	On the other hand,  since
	$
	a_{n+1} = 10^{(n+1)!} = (10^{n!})^{n+1} = a_n^{n+1},
	$
	we have $a_n^{n+2} = a_n^{n+1}a_n = a_{n+1} 10^{n!} \geqslant  (a_{n+1} + 2)$ for all $n$. Thus,
	$$
	\dfrac{1}{a_{n}^{n+2}q_n} \leqslant  \dfrac{1}{(a_{n+1} + 2)q_{n}} \leqslant  |p_n - \alpha q_n|,\quad \forall n \in \mathbb{N}.
	$$
	
	By the recurrence relation  $q_n = a_nq_{n-1} + q_{n-2}$, $n \geqslant  2$, we have $q_{n} \geqslant  a_{n}$ and thereby
	$$
	\dfrac{ 1 }{ q_{n}^{n+3} } \leqslant  |p_n - \alpha q_n|,\quad \forall n \geqslant  2.
	$$
	
	Therefore, given $\epsilon>0$, it is sufficient to prove that
	$$
	q_n^{n+3} \leqslant  e^{\epsilon q_{n-1}^{{ 1 }/{ s }}}, \quad n >> 0,
	$$
	where $n >> 0$ means that $n$ is large enough.
	
	Observe that for $n>>0$ we have
	$$
	2n\log(q_n) \leqslant  \epsilon q_{n-1}^{{ 1 }/{ s }} \Longrightarrow (n + 3)\log(q_n) \leqslant  \epsilon q_{n-1}^{{ 1 }/{ s }} \Longrightarrow q_n^{n+3} \leqslant  e^{\epsilon q_{n-1}^{{ 1 }/{ s }}}.
	$$
	
	Also, since  $q_n = a_nq_{n-1} + q_{n-2} \leqslant  (a_n + 1)q_{n-1} \leqslant  a_n^2 q_{n-1}$, $n \geqslant  3$, we have
	$$
	2n \log(q_n) \leqslant  4n \log(a_n) + 2n\log(q_{n-1}).
	$$
	Finally,  in order to complete the proof we show that
	
	\begin{enumerate}
		\item [(1)] $4n \log(a_n) \leqslant  \dfrac{\epsilon}{2} q_{n-1}^{{ 1 }/{ s }}$, for $n >> 0$ and
		\item[(2)] $2n\log(q_{n-1}) \leqslant  \dfrac{\epsilon}{2} q_{n-1}^{{ 1 }/{ s }}$,  for $n >> 0$.
	\end{enumerate}		
	
	\medskip
	
	To prove ($1$),  observe that $4n \log(a_n) = 4n \log(10^{n!}) = 4n (n!) \log(10)$ and that $q_{n-1}^{{ 1 }/{ s }} \geqslant  a_{n-1}^{{1}/{s}} = 10^{\frac{ (n-1)! }{ s }}$, 
	thus it is enough to show that
	$4n (n!) \log(10) \leqslant \dfrac{ \epsilon}{2} 10^{\frac{ (n-1)! }{ s }}$, for $n >> 0$, which follows from the fact that sequence
	$$
	n(n!)\left(10^{\frac{(n-1)!}{s}}\right)^{-1},
	$$
	goes to zero when $n$ goes to infinity.

	To prove $(2)$, note that  $x^{-1/s}\log(x)$ is a decreasing function for $x > e^s$. Then
	$$
	0\leqslant \dfrac{2n \log(q_{n-1})}{q_{n-1}^{{ 1 }/{ s }}} \leqslant  \dfrac{2n \log(10^{(n-1)!})}{10^{\frac{ (n-1)! }{ s }}}= 2\log(10)  \dfrac{n!}{10^{\frac{ (n-1)! }{ s }}} , \quad n >> 0.
	$$
	
	Since the sequence
	$$
	n!\left(10^{\frac{(n-1)!}{s}}\right)^{-1},
	$$
	goes to zero when $n$ goes to infinity, then the proof is complete.
	
\end{proof}

\begin{example}
	If $\alpha=[10^{1!}, 10^{2!}, 10^{3!}, \dots]$ and $\beta\in\mathbb{Q}$, then the system
	$$
	\begin{cases}
	L_1=\dfrac{\partial}{\partial t_1}+\alpha\dfrac{\partial}{\partial x}\\
	L_2=\dfrac{\partial}{\partial t_2}+\beta\dfrac{\partial}{\partial x},
	\end{cases} \
	$$
	is globally $s-$hypoelliptic, for all $s\geqslant1$, but is not globally hypoelliptic. Indeed, this follows immediately from Theorem
	\ref{main theorem}, Theorem 3.3 in  \cite{Berg99}  and Proposition \ref{not exp Liouville of order s}.
\end{example}

\begin{example}
	Consider the system
	$$
	\begin{cases}
	L_1=\dfrac{\partial}{\partial t_1}+\alpha\dfrac{\partial}{\partial x}\\
	L_2=\dfrac{\partial}{\partial t_2}+(\beta+i\sin(t_2))\dfrac{\partial}{\partial x},
	\end{cases}
	$$
	where $\alpha,\beta \in\mathbb{R}.$
	\begin{enumerate}
		\item[$\circ$] 	If $\alpha=[10^{1!}, 10^{2!}, 10^{3!}, \dots]$ then	this system is $s-$globally hypoelliptic for all $s\geqslant1$, but is not globally hypoelliptic.
		\item[$\circ$] In Lemma A in \cite{GPY93}, the authors show that there is an irrational number $\alpha$ that is not exponential Liouville, but it is exponential Liouville of order $s>1$. For this $\alpha$,  the system above is globally analytic hypoelliptic, but is not globally $s-$hypoelliptic, for any $s>1$.
		\item[$\circ$]  Also, in Lemma A in \cite{GPY93}, the authors show that it is possible to construct an irrational number $\alpha$ that is not exponential Liouville of order $s\geqslant1$, but it is exponential Liouville of order $s'>s$. For this $\alpha$,  the system above is globally $s-$ hypoelliptic, but is not globally $s'-$hypoelliptic.
	\end{enumerate}
	
\end{example}

\begin{remark}\label{prop_consequencia_prova_suficiencia}
	According to  Theorem \ref{main theorem}, if at least one of the vector fields
	$$
	L_j=\partial_{t_j}+(a_j+ib_j)(t_j)\partial_x, \ 1\leqslant j \leqslant n,
	$$
	is globally $s-$hypoel\-liptic on $\mathbb{T}_{t_j,x}^2$, then the system  \eqref{system-main-thm} is globally $s-$hypoel\-liptic on $\mathbb{T}^{n+1}$.
	The reciprocal, however,  does not hold.
	
	In fact, in Example $4.9$ in  \cite{Berg99}, there was presented  a couple of exponential Liouville numbers $\alpha$ and $\beta$ such that $(\alpha, \beta)$ is not an exponential Liouville vector. In the proof of this statement, the author observed that $(\alpha, \beta)$ is not even a Liouville vector.
	
	Therefore, these same numbers $\alpha$ and $\beta$ are exponential Liouville of order $s\geqslant  1$ and  $(\alpha, \beta)$ is not an exponential  Liouville vector of order $s\geqslant 1$. In particular, the system
	$$
	\begin{cases}
	L_1=\dfrac{\partial}{\partial t_1}+\alpha \dfrac{\partial}{\partial x}\\
	L_2=\dfrac{\partial}{\partial t_2}+\beta \dfrac{\partial}{\partial x}.
	\end{cases}
	$$
	is globally $s-$hypoelliptic, although the  vector fields $L_1$ and $L_2$ do not have this property.
\end{remark}

\begin{remark} When the coefficients of the system \eqref{system-main-thm} are not essentially real ($J=\emptyset$), it has already been shown that this system is globally $s-$hypoel\-liptic if and only if there is a function $t_j \in \mathbb{T}^1 \mapsto b_j(t_j)$ that does not change sign. In other words, the global $s-$hypoellipticity of \eqref{system-main-thm}  is equivalent to the validity of the Nirenberg-Treves condition $(P)$ for some vector field $L_j=\partial_{t_j}+(a_j+ib_j)(t_j)\partial_x$ on $\mathbb{T}_{t_j,x}^2$.
	
	This contrasts with the local theory. For example, the system
	$$
	\begin{cases}
	L_1= \dfrac{\partial}{\partial{t_1}} +it_1\dfrac{\partial}{\partial x}\\[4mm]
	L_2 = \dfrac{\partial}{\partial{t_2}} -it_2\dfrac{\partial}{\partial x}
	\end{cases}
	$$
	is locally $s-$hypoelliptic at the origin because the first integral $Z(t,x)=x+\frac{i}{2}(t_1^2-t_2^2)$ is an open map at the origin. But neither  $L_1$ nor $L_2$ is locally $s-$hypoelliptic at the origin.
\end{remark}

\begin{remark}
	Suppose again that the system \eqref{system-main-thm} has no real vector fields and consider the following closed $1-$form  on $\mathbb{T}^n$
	$$b(t)=\sum_{j=1}^nb_j(t_j)dt_j.$$
	
	

	Let $B$ be a global primitive of the pull-back $\Pi^*b$, where  $\Pi: \mathcal{T} \rightarrow \mathbb{T}^n$ is a minimal covering space of $\mathbb{T}^n$  with respect to the $1-$form $b$, in the sense of Definition 2.1 in \cite{BKNZ12}. It follows from Lemma 2.6 in \cite{BdMZ12} and Theorem \ref{main theorem} that:
	
	The system \eqref{system-main-thm}  is globally $s-$hypoelliptic if and only if $b$ is not exact and the sublevels $\Omega_r=\{t\in\mathcal{T}: B(t)<r\}$ and the superlevels $\Omega^r=\{t\in\mathcal{T}: B(t)>r\}$ are connected, for every $r\in \mathbb{R}$.
\end{remark}

\end{document}